\documentclass[a4paper,12pt]{amsart}
\usepackage{amssymb}
\usepackage{amsmath, amsthm, amscd, amsfonts, amssymb, graphicx, color}
\usepackage{amsmath, amsthm, amscd, amsfonts, amssymb, graphicx, color}

\textheight 22.5truecm \textwidth 14.5truecm
\setlength{\oddsidemargin}{0.35in}\setlength{\evensidemargin}{0.35in}

\setlength{\topmargin}{-.5cm}

\newtheorem{theorem}{Theorem}[section]
\newtheorem{lemma}[theorem]{Lemma}

\newtheorem{corollary}[theorem]{Corollary}
\theoremstyle{definition}
\newtheorem{definition}[theorem]{Definition}
\newtheorem{example}[theorem]{Example}

\theoremstyle{remark}
\newtheorem{remark}[theorem]{Remark}
\numberwithin{equation}{section}

\title[Hypercyclic generalized shift operators]
{Hypercyclic generalized shift operators}

\author[S. Ivkovi\'{c}]{Stefan Ivkovi\'{c}}
\address{Mathematical Institute of the Serbian Academy of Sciences and Arts,
	p.p. 367, Kneza Mihaila 36, 11000 Beograd, Serbia}
\email{stefan.iv10@outlook.com }

\author[S. M. Tabatabaie]{Seyyed Mohammad Tabatabaie}
\address{Department of Mathematics, University of Qom, Qom, Iran}
\email{sm.tabatabaie@qom.ac.ir}

\subjclass[2010]{47A16}

\keywords{Bilateral shift operator, hypercyclic operator, topological transitivity, Segal algebras}

\date{\today}

\begin{document}

\maketitle

\begin{abstract}
In this paper, we study linear dynamical properties of shift operators on some classes of Segal algebras.  Moreover,  we characterize hypercyclic generalized bilateral shift operators on the standard Hilbert module.
\end{abstract}

\baselineskip17pt

\section{Introduction and Preliminaries}
Hypercyclicity, as a main linear dynamical property, is related to very notions of topological dynamics, such as topological transitivity, topological mixing, linear chaos and so on, and  has been an  active research topic in mathematics, fruitful results and theories appeared during the last four decades. We refer to these two monographs  \cite{bmbook,gpbook} on this theme.

Among other works, H. Salas in \cite{sa95} presented some characterization of hypercyclic weighted shifts on $\ell^p(\mathbb{Z})$. Then, inspired by this paper and using the aperiodic elements, some sufficient and necessary conditions were obtained in \cite{cc09,cc11} for weighted translation operators to be hypercyclic on the Lebesgue space in context of homogeneous spaces and locally compact groups; see also \cite{chta2,kum} for study of hypercyclicity in the context of hypergroups.

Recently, these works made a significant motivation for researchers to study other kinds of shift operators on various function spaces such as Orlicz spaces to demonstrate some cruel results and construct important examples; see \cite{ccot, cot,kalmes10,kostic}. Finally, chaotic and hypercyclic operators and related topics were studied on some class of solid Banach function spaces in \cite{tabsaw,chta3,tabiv2}, which cover so many kinds of known Banach function spaces. Recently, hypercyclic translation operators also have been studied on the algebra of compact operators in \cite{ivc}; see also \cite{pet}. 

In this paper, we give a characterization of generalized hypercyclic bilateral shift operators which are defined on the standard Hilbert module $\ell^2(\mathcal{A})$, where $\mathcal{A}$ is a proper ideal of a unital commutative C$^\ast$-algebra. Also, We charactrize some classes of hypercyclic operators defined on a  C$^*$-algebra $\mathcal{A}$ corresponding  to an isometric $\ast$-isomorphism $\Phi$ and a fixed element $b\in\mathcal{A}$.
In last section, by some technical lemmas, first we construct an approximate identity for Segal algebras $\mathcal{A}_\tau$ which were introduced in \cite{tak}. Then, as an application, we  give some sufficient and necessary conditions for a new class of shift operators on $\mathcal{A}_\tau$ to be hypercyclic.

Next, we recall some definitions and introduce some notations.
If $\mathcal X$ is a Banach space, the set of all bounded linear operators from $\mathcal X$ into $\mathcal X$ is denoted by $B(\mathcal X)$. Also, we denote $\mathbb{N}_0:=\mathbb{N}\cup\{0\}$.
\begin{definition}
	Let $\mathcal X$ be a Banach space. A sequence $(T_n)_{n\in \mathbb{N}_0}$  of operators in $B(\mathcal X)$ is called {\it topologically transitive} if for each non-empty open subsets $U,V$ of
	${\mathcal X}$, $T_n(U)\cap V\neq \varnothing$ for some $n\in \mathbb{N}$. If $T_n(U)\cap V\neq \varnothing$ holds from some $n$ onwards, then
	$(T_n)_{n\in \mathbb{N}_0}$ is called {\it topologically mixing}. 
\end{definition}
\begin{definition}
	Let $\mathcal X$ be a Banach space. A sequence $(T_n)_{n\in \mathbb{N}_0}$  of operators in $B(\mathcal X)$ is called {\it hypercyclic} if there is an element $x\in\mathcal X$ (called \emph{hypercyclic vector}) such that the orbit $\mathcal O_x:=\{T_nx:\,n\in\mathbb N_0\}$ is dense in $\mathcal X$. The set of all hypercyclic vectors of a sequence $(T_n)_{n\in \mathbb{N}_0}$ is denoted by $HC((T_n)_{n\in \mathbb{N}_0})$. If $HC((T_n)_{n\in \mathbb{N}_0})$ is dense in $\mathcal X$, the sequence $(T_n)_{n\in \mathbb{N}_0}$ is called \emph{densely hypercyclic}. An operator $T\in B(\mathcal X)$ is called \emph{hypercyclic} if the sequence $(T^n)_{n\in \mathbb N_0}$ is hypercyclic.
\end{definition}
Note that a sequence $(T_n)_{n\in \mathbb{N}_0}$  of operators in $B(\mathcal X)$ is topologically transitive if and only if it is densely hypercyclic \cite[Theorem 1.57]{gpbook}. In particular, an operator $T\in B(X)$ is hypercyclic if and only if it is topologically transitive \cite[Theorem 2.19]{gpbook}. A Banach space admits a hypercyclic operator if and only if it is separable and infinite-dimensional \cite{bg99}. So, in this paper we assume that Banach spaces are separable and infinite-dimensional.
\begin{definition}
	Let $\mathcal X$ be a Banach space, and $(T_n)_{n\in \mathbb{N}_0}$  be a sequence of operators in $B(\mathcal X)$. A vector $x\in \mathcal X$ is called a {\it periodic element} of $(T_n)_{n\in \mathbb{N}_0}$ if there exists a constant $N\in\mathbb N$ such that for each $k\in\mathbb N$, $T_{kN}x=x$. The set of all periodic elements of $(T_n)_{n\in \mathbb{N}_0}$ is denoted by
${\mathcal P}((T_n)_{n\in \mathbb{N}_0})$. The sequence $(T_n)_{n\in \mathbb{N}_0}$ is called {\it chaotic} if $(T_n)_{n\in \mathbb{N}_0}$ is topologically transitive and ${\mathcal P}((T_n)_{n\in \mathbb{N}_0})$ is dense in ${\mathcal X}$. An operator $T\in B(\mathcal{X})$ is called \emph{chaotic} if the sequence $\{T^n\}_{n\in \mathbb{N}_0}$ is chaotic. 
\end{definition}
\section{Generalized hypercyclic bilateral shift operators}
Let $\mathcal{A}$ be a commutative C$^\ast$-algebra with norm $\|\cdot\|$, and let  $\ell^2(\mathcal{A})$ be the standard Hilbert module over $\mathcal{A}$. We denote the norm of $\ell^2(\mathcal{A})$ by $\|\cdot\|_2$. Assume that $\Phi:\mathcal{A}\rightarrow\mathcal{A}$ is an isometric $\ast$-isomorphism. Fix a sequence $b:=(b_j)_{j\in\mathbb{Z}}\subseteq\mathcal{A}$  such that $\sup_{j\in\mathbb{Z}}\|b_j\|<\infty$. For each $n\in\mathbb{N}$ define the mapping 
$$T_{\Phi,b,n}:\ell^2(\mathcal{A})\rightarrow \ell^2(\mathcal{A}),\quad \big(T_{\Phi,b,n}(a)\big)_{j}:=\left[\prod_{k=1}^{n} \Phi^{n-k}(b_{j-n+k})\right] \Phi^{n}(a_{j-n}),$$
 where $a=(a_k)_{k\in\mathbb{Z}}\in\ell^2(\mathcal{A})$ and $j\in\mathbb{Z}$.

\begin{remark}
	For each $a:=(a_j)_j\in\mathcal\ell^2(\mathcal A)$ we have $\|a_j\|\leq \|a\|_2$ for all $j\in\mathbb Z$. Indeed,
	\begin{align*}
	\|a_j\|^2&=\sup\{|h(a_j)|^2:\,h\in\sigma(\mathcal A)\}\\
	&\leq \sup\big\{\sum_{i\in\mathbb Z}|h(a_i)|^2:\,h\in\sigma(\mathcal A)\big\}\\
	&=\sup\big\{|\sum_{i\in\mathbb Z}h(a_i^*\,a_i)|:\,h\in\sigma(\mathcal A)\big\}\\
	&=\sup\big\{|h(\sum_{i\in\mathbb Z}a_i^*\,a_i)|:\,h\in\sigma(\mathcal A)\big\}\\
	&=\|\sum_{i\in\mathbb Z} a_j^*\,a_j\|=\|a\|_2^2.
	\end{align*}
\end{remark}
Next, the spectrum of $\mathcal A$ is denoted by $\sigma(\mathcal A)$. Also, for each $J\in\mathbb{N}$ we denote $I_J:=[-J,J]\cap\mathbb Z$.
\begin{theorem}\label{thm1}
We have (1)$\Rightarrow$(2):
\begin{enumerate}
\item The sequence $(T_{\Phi,b,n})_{n\in\mathbb{N}_0}$ is hypercyclic. 
\item For every non-empty compact subset $K$ of $\sigma(\mathcal A)$ and each non-empty finite subset $I$ of $\mathbb{N}$ there exists a strictly increasing sequence $(r_{k})_{k \in \mathbb{N}} $ in $\mathbb{N}$ such that 
	$$ \lim_{k\rightarrow\infty} \left(\sup_{h\in K} \prod_{i=1}^{r_{k}} \left|h\big(\Phi^{-i}(b_{i+j})\big)\right|\right)=\lim_{k\rightarrow\infty} \left(\sup_{h\in K} \prod_{i=1}^{r_{k}} \left|h\big(\Phi^{i-1}(b_{j-i+1})\big) \right|^{-1}\right)=0$$ 
	for all $j\in I$.
	\end{enumerate}
Also, if $\mathcal{A}$ is unital and for each $j$, $b_j$ is invertible in $\mathcal{A}$ with $\sup_{j\in\mathbb{Z}}\|b_j^{-1}\|<\infty$, then (2)$\Rightarrow$(1). 
\end{theorem}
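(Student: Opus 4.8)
The plan is to recognize that, thanks to commutativity of $\mathcal A$, the whole sequence is the orbit of a single shift, and then to translate topological transitivity into the product conditions of (2) via the Gelfand transform. First I would set $T:=T_{\Phi,b,1}$, so that $(Ta)_j=b_j\,\Phi(a_{j-1})$, and check by an easy induction that $T_{\Phi,b,n}=T^n$ for every $n$; the only point is that the reordering $\Phi(b_{j-1})\,b_j=b_j\,\Phi(b_{j-1})$ needed to match the defining product uses that $\mathcal A$ is commutative. Consequently $(T_{\Phi,b,n})_{n}$ is hypercyclic if and only if the operator $T$ is, equivalently if and only if $T$ is topologically transitive, by the equivalence recalled in the Introduction. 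From here on I work with $T$ and with arbitrary nonempty open sets $U,V$.

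The key computational step is a dictionary between operator norms and the products in (2). Identifying $\mathcal A\cong C_0(\sigma(\mathcal A))$, so that $h(a)=\hat a(h)$ and $\|a\|=\sup_h|h(a)|$, and noting that $h\mapsto h\circ\Phi^{\pm n}$ permutes $\sigma(\mathcal A)$, I would compute the action of $T^n$ on a one–coordinate vector $c\,e_j$ (with $c\in\mathcal A$ in place $j$ and $0$ elsewhere). It is supported in place $j+n$, and a reindexing of the defining product together with the substitution $g=h\circ\Phi^{n}$ yields $\|T^n(c\,e_j)\|_2=\sup_g|g(c)|\prod_{i=1}^n|g(\Phi^{-i}(b_{j+i}))|$. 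When every $b_j$ is invertible, the formal right inverse $S_n(c\,e_j):=\Phi^{-n}\big(\prod_{k=1}^n\Phi^{n-k}(b_{j-n+k})^{-1}\,c\big)\,e_{j-n}$ satisfies $T^nS_n=\mathrm{id}$ on finitely supported vectors and, after the conjugate reindexing, $\|S_n(c\,e_j)\|_2=\sup_h|h(c)|\prod_{i=1}^n|h(\Phi^{i-1}(b_{j-i+1}))|^{-1}$. Thus the two families in (2) are exactly the $K$–suprema that govern $\|T^n(c\,e_j)\|_2$ and $\|S_n(c\,e_j)\|_2$ once $\hat c$ is supported in $K$.

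For $(2)\Rightarrow(1)$ I would verify transitivity directly. Given nonempty open $U,V$, use the Remark and density of finitely supported vectors to replace them by finitely supported $u,v$ whose entries have Gelfand transforms supported in one compact $K\subseteq\sigma(\mathcal A)$ and whose supports lie in one finite $I$; apply (2) to this $K$ and $I$ to obtain $(r_k)$. Setting $x_k:=u+S_{r_k}v$, the second limit gives $\|x_k-u\|_2\to0$ and the first gives $T^{r_k}x_k=T^{r_k}u+v\to v$, so $T^{r_k}(U)\cap V\neq\varnothing$ for large $k$. This is exactly where unitality and the invertibility of the $b_j$ with $\sup_j\|b_j^{-1}\|<\infty$ are used: they are what allow $S_n$ to be defined and the inverse products to be finite.

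For $(1)\Rightarrow(2)$ I would fix $K$ and $I$, enlarge $I$ to a symmetric block $\tilde I$, and choose a test vector $v=\sum_{j\in\tilde I}c_j\,e_j$ with $\|c_j\|\le1$ and $\hat c_j\equiv1$ on $K$ (Urysohn in $C_0(\sigma(\mathcal A))$). Applying transitivity to $U=V=B(v,\varepsilon_k)$ with $\varepsilon_k\downarrow0$, and using the standard fact that a hypercyclic orbit meets a neighborhood along an infinite set of times, I obtain $n_k\to\infty$ and vectors $y$ with $y,\,T^{n_k}y\in B(v,\varepsilon_k)$. Reading the coordinate $j\in\tilde I$ of $T^{n_k}y$, where the incoming coordinate $y_{j-n_k}$ is off the support of $v$ hence small, forces $\prod_{i=1}^{n_k}|h(\Phi^{i-1}(b_{j-i+1}))|\to\infty$ uniformly on $K$, which is the second limit; reading the off-support coordinate $j+n_k$ and evaluating the resulting scalar estimate at the character $h\circ\Phi^{-n_k}$ (legitimate because $\|(T^{n_k}y)_{j+n_k}\|<\varepsilon_k$ bounds every character) forces $\prod_{i=1}^{n_k}|h(\Phi^{-i}(b_{j+i}))|\to0$ uniformly on $K$, which is the first limit. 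Taking $r_k=n_k$ finishes this direction. I expect the main obstacle to be precisely this last bookkeeping of the character twists by $\Phi^{\pm n_k}$: one must pick the substitution ($g=h\circ\Phi^{n}$ in the norm identity, $\ell=h\circ\Phi^{-n}$ in the necessity estimate) so that the relevant suprema land on the given $K$ rather than on its moving image $K\circ\Phi^{\pm n}$; once this is arranged, the dynamical input and the construction of $S_n$ are routine.
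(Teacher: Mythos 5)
Your proposal is correct and follows essentially the same route as the paper: for $(1)\Rightarrow(2)$ both arguments test the orbit against a vector whose entries have Gelfand transform identically $1$ on $K$ and read off the coordinates $j$ and $j+n_k$ after untwisting by $\Phi^{\pm n_k}$, and for $(2)\Rightarrow(1)$ both show that $T^{r_k}$ and the right inverse $S_{r_k}$ tend to $0$ pointwise on the dense set of finitely supported vectors with compactly supported Gelfand transforms (your $x_k=u+S_{r_k}v$ is just the unpacked form of the hypercyclicity criterion the paper invokes). The norm identities $\|T^n(c\,e_j)\|_2$ and $\|S_n(c\,e_j)\|_2$ you derive match the paper's computations exactly.
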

\begin{proof}
	Assume that $(T_{\Phi,b,n})_{n\in\mathbb{N}_0}$ is hypercyclic. Note that since $T_{\Phi,b,n}=T_{\Phi,b,1}^n$ for all $n$, the sequence $(T_{\Phi,b,n})_{n\in\mathbb{N}_0}$ is densely hypercyclic. Let $ \varnothing\neq I\subset\mathbb{N}$ be finite and $\varnothing\neq K\subseteq \sigma(\mathcal A)$ be compact. 
	Then, there is some $y\in\mathcal A$ such that $h(y)=1$ for all $h\in K$.
Define an element  $z:=(z_j)_{j\in\mathbb{Z}}\in\ell^2(\mathcal A)$ with $z_j:=y$ if $j\in I$ and $z_j:=0$ otherwise. So, there is a hypercyclic vector $a^{(1)} \in  \ell^2(\mathcal{A})$ for  the sequence $(T_{\Phi,b,n})_{n\in\mathbb{N}_0}$ such that 
\begin{equation}\label{eq0}
\|a^{(1)}-z\|_2<\frac{1}{2}.
\end{equation}
 Hence, setting $J:=\max_{j\in I}|j|$ we can find an $r_{1}>2J$ such that 
\begin{equation}\label{eq1}
\|T_{\Phi,b,r_1}(a^{(1)})-z\|_2 < \frac{1}{4}.
\end{equation}

 This implies that for all $j\in I$ we have $\|a_{j}^{(1)}-y\|<\frac{1}{2}$, and therefore 
\begin{equation}\label{eq2}
|h(a_j^{(1)})|>\frac{1}{2},\qquad (h\in K).
\end{equation}
Next, by \eqref{eq1},
	$$\Big\|\left[\prod_{i=1}^{r_{1}} \Phi^{r_1-i}(b_{j+i})\right]\,\Phi^{r_1}(a_{j}^{(1)})\Big\|=\Big\|\left[\prod_{i=1}^{r_{1}} \Phi^{r_1-i}(b_{j+i})\right]\,\Phi^{r_1}(a_{j}^{(1)})-z_{j+r_{1}}\Big\| <\dfrac{1}{4}$$ 
	for all $j \in I$, because $r_{1} > 2J$ implies that $z_{j+r_{1}}=0$.
	Since $\Phi$ is an isometric $\ast$-isomorphism, we get 
	$$\Big\|\left[\prod_{i=1}^{r_{1}} \Phi^{-i}(b_{j+i})\right] \,a_{j}^{(1)}\Big\|< \dfrac{1}{4}$$ 
	for all $j\in I$, and hence 
		$$\left[\prod_{i=1}^{r_{1}} |h(\Phi^{-i}(b_{j+i}))|\,\right] \,|h(a_{j}^{(1)})|< \dfrac{1}{4}$$
	for all $h\in K$ and $j \in I$.
	By \eqref{eq2}, this follows that 
			$$\prod_{i=1}^{r_{1}} |h(\Phi^{-i}(b_{j+i}))|< \dfrac{1}{2}$$
	for all $h\in K$ and $j \in I$.
 Similarly, for each $k \in \mathbb{N}$ we can find a hypercyclic vector $ a^{(k)} \in \ell^2(\mathcal A)$ for $(T_{\Phi,b,n})_{n\in\mathbb{N}_0}$ such that
  $\|a^{(k)}-z\|<\frac{1}{2^{k}}$. Moreover, we can choose $r_{k} >r_{k-1}> \cdots > r_{1} >2J$ such that  
  \begin{equation}\label{eq3}
  \|T_{\Phi,b,r_k}(a^{(k)})-z\|_2 < \frac{1}{2^{2k}}.
  \end{equation}
   By proceeding as above,
   			$$\prod_{i=1}^{r_{k}} |h(\Phi^{-i}(b_{j+i}))|< \dfrac{1}{2^k(2^k-1)}$$
   for all $h\in K$ and $j \in I$, and hence
	$$\sup_{h\in K} \prod_{i=1}^{r_{k}} |h(\Phi^{-i}(b_{j+i}))|\leq \frac{1}{2^{k}(2^k-1)}.$$ 
	for all $ k \in \mathbb{N}$. This implies that  
	$$ \lim_{k\rightarrow\infty}\left(\sup_{h\in K} \prod_{i=1}^{r_{k}} |h(\Phi^{-i}(b_{j+i}))|\right)=0.$$ 
	for all $j\in I$.

	Now, the relation \eqref{eq0} implies that 
	\begin{equation}\label{eq4}
	\|a_{j-r_{1}}^{(1)} \| < \frac{1}{2}
	\end{equation}
	for all $j \in I$ because  $z_{j-r_{1}}=0 $ by the choice of $z$. Also, by \eqref{eq1}, 
$$\Big\| \left[\prod_{i=1}^{r_{1}} \Phi^{r_1-i}(b_{j-r_{1}+i})\right]\,\Phi^{r_1}(a_{j-r_{1}}^{(1)})-y \Big\|<\frac{1}{4}$$ 
for all $j\in I$, so  for all $h\in K$ we have
$$\left|\left[\prod_{i=1}^{r_{1}} h(\Phi^{r_1-i}(b_{j-r_{1}+i}))\right]\,h(\Phi^{r_1}(a_{j-r_{1}}^{(1)}))-1\right|< \dfrac{1}{4}.$$
and this implies that
$$\left|\left[\prod_{i=1}^{r_{1}} h(\Phi^{r_1-i}(b_{j-r_{1}+i}))\right]\,h(\Phi^{r_1}(a_{j-r_{1}}^{(1)}))\right| > \frac{3}{4}.$$
Now, thanks to the inequality \eqref{eq4} we get 
$$\prod_{i=1}^{r_{1}} \left|h(\Phi^{r_1-i}(b_{j-r_{1}+i}))\right|^{-1}<\frac{2}{3}$$ 
for all $h\in K$ and $j \in I$. Hence, substituting $s=r_{1}-i+1 ,$ we get that 
$$\sup_{h\in K} \prod_{s=1}^{r_{1}} \left|h(\Phi^{s-1}(b_{j+1-s}))\right|^{-1} < \frac{2}{3}$$ 
for all $j \in I$. Similarly, using that $\|a^{(k)}-z\|_2 <\dfrac{1}{2^{k}} $ and \eqref{eq3} we conclude that
$$\sup_{h\in K} \prod_{s=1}^{r_k} \left|h(\Phi^{s-1}(b_{j+1-s}))\right|^{-1}<  \dfrac{1}{2^{k}-\frac{1}{2^{k}}}  $$ for all $k \in \mathbb{N} $ and $j\in I$, and this completes the proof.

Conversely, assume that (2) holds. Let $\mathcal{F}$ be the set of  all sequences $(y_j)\in\ell^2(\mathcal{A})$ such that for some finite subset $I$ of $\mathbb{Z}$, $y_j=0$ for all $j\in\mathbb{Z}\setminus I$, and ${\rm supp}\widehat{y_{j}}$ is compact for all $j\in I$, where $\widehat{y_{j}}$ is the Gelfand transform of $y_j$. Then, the set $\mathcal{F}$
is dense in $\ell^2(\mathcal{A})$. Let $y:=(y_j)_j \in \mathcal{F}$. Then, there exists some $J\in\mathbb{N}$ such that $y_j=0$ for all $j\in\mathbb{Z}\setminus I_J$, and   $K_j:={\rm supp}\widehat{y_{j}}$  is compact for all $ j \in I_J$. Put $K:=\bigcup_{j=-J}^JK_j$. By the assumption, there exists an increasing sequence $(r_{k})_{k \in \mathbb{N}} $ in $ \mathbb{N}$ such that 	
$$ \lim_{k\rightarrow\infty} \left(\sup_{h\in K} \prod_{i=1}^{r_{k}} \left|h\big(\Phi^{-i}(b_{i+j})\big)\right|\right)=\lim_{k\rightarrow\infty} \left(\sup_{h\in K} \prod_{i=1}^{r_{k}} \left|h\big(\Phi^{i-1}(b_{j-i+1})\big) \right|^{-1}\right)=0.$$
For each and $j \in I_J$ and $k \in \mathbb{N}$ we have
$$(T_{\Phi,b,r_k}(y))_{j+r_{k}}=  \prod_{i=1}^{r_{k}} \Phi^{r_{k}-i}(b_{j+i}) \Phi^{r_k}(y_{j}),$$ 
and so, since $\Phi$ is an isometric $\ast$-isomorphism, 
$$\|(T_{\Phi,b,r_k}(y))_{j+r_{k}}\|=\|\prod_{i=1}^{r_{k}} \Phi^{-i}(b_{j+i}) \,y_{j}\|.$$ 
However, for each $j\in I_J$ we have
$$\|\prod_{i=1}^{r_{k}} \Phi^{-i}(b_{j+i}) \,y_{j}\| \leq  \sup_{h\in K} \left[\prod_{i=1}^{r_{k}} |h(\Phi^{-i}(b_{j+i})|\right]\,\| y_{j} \|$$ 
since ${\rm supp}\widehat{y_j}\subseteq K$. This follows that 
$$\lim_{k \rightarrow \infty} \| (T_{\Phi,b,r_k}(y))_{j+r_{k}} \|=0 $$ 
for all $j \in I_J$. As $y_{j}=0 $ whenever $ j \notin I_J$, we have 

$$(T_{\Phi,b,r_k}(y))_{j+r_{k}} =  \prod_{i=1}^{r_{k}} \Phi^{r_{k}-i}(b_{j+i}) \,\Phi^{r_k}(y_{j})=0 $$
for all $j \in \mathbb{Z} \setminus I_J$. It is easy to see that $\lim_{k \rightarrow \infty}T_{\Phi,b,r_k}(y) =0 $ in $\ell^2(\mathcal{A})$.  Now,  define $S_{\Phi,b,1}:\ell^2(\mathcal{A})\rightarrow\ell^2(\mathcal{A})$ by
$(S_{\Phi,b,1})_j:=\Phi^{-1}(b_{j+1}^{-1} a_{j+1})$, and put $S_{\Phi,b,n}:=S_{\Phi,b,1}^n$ for all $n\in\mathbb{N}$. Then, 
 for all $ j\in\mathbb{Z}$ and $k \in \mathbb{N}$ we have 
$$(S_{\Phi,b,r_k}(y))_{j+r_{k}} =  \prod_{i=1}^{r_{k}} \Phi^{i-r_{k}-1}(b_{j+1-i}^{-1}) \Phi^{-r_{k}}\,(y_{j}).$$ 
Hence, for each $j\in I_J$,
\begin{align*}
\| (S_{\Phi,b,r_k}(y))_{j+r_{k}} \|&=\left\| \prod_{i=1}^{r_{k}} \Phi^{i-1}(b_{j+1-i}^{-1} ) \,\,y_{j}\right\|\\
&\leq 
\sup_{h\in K} \left[\prod_{i=1}^{r_{k}} |h(\Phi^{i-1}(b_{j+1-i}^{-1}))|\right]\, \| y_{j} \|.
\end{align*}
Therefore, 
$$\lim_{k\rightarrow\infty} \| (S_{\Phi,b,r_k}(y))_{j+r_{k}}\|=0 $$
for all $j \in I_J$. Similarly, as above we deduce that $\lim_{k\rightarrow\infty}S_{\Phi,b,r_k}(y)=0$, so $T_{\Phi,b,r_k}$ and $S_{\Phi,b,r_k}$ converge pointwise on $\mathcal{F}$ which is dense in $ \ell^2(\mathcal{A}).$ Hence, we deduce that $T_{\Phi,b,r_k}$ is hypercyclic on $\ell^2(\mathcal{A})$.
\end{proof}
\begin{remark}\label{rem1}
	We mention that in the above discussion, if  $\mathcal A$ is a closed proper ideal of a unital commutative C$^\ast$-algebra $\mathcal E$ such that the characters of $\mathcal{A}$ can be  extended to characters on $\mathcal{E}$, and if  $\Phi:\mathcal{E}\rightarrow\mathcal{E}$ is an isometric $\ast$-isomorphism with $\Phi(\mathcal{A})=\mathcal{A}$, one can pick the sequence $b:=(b_j)_{j\in\mathbb{Z}}\subseteq\mathcal{E}$. In this case, some fact similar to Theorem \ref{thm1} holds without the unital condition at the second part. In particular, one can consider $\mathcal{E}:=C_b(X)$ and $\mathcal{A}:=C_0(X)$, where $X$ is a locally compact non-compact Hausdorff space. Another example is $\mathcal{E}:= \text{the classical unitalization of } \mathcal{A}$.
\end{remark}
Let $X$ be a locally compact, non compact Hausdorff space. Let $W:=(w_{j})_{j \in \mathbb{Z}} $ be a sequence in $C_b(X) $ with $\sup_j\|w_{j}\|_{\infty}<\infty$.  Let $\alpha $ be a homeomorphism on $X$, and define 
$$\Phi_\alpha:C_b(X)\rightarrow C_b(X),\quad \Phi_\alpha(f)(x):=f(\alpha(x))$$
for all $f\in C_b(X)$ and $x\in X$. Then, $\Phi_\alpha$ is an isometric $\ast$-isomorphism on $C_b(X)$. We denote $T_{\alpha , W}:=T_{\Phi_\alpha,W,1}$.
The next result is a direct conclusion of Theorem \ref{thm1} and Remark \ref{rem1}.
\begin{corollary}\label{cor0}
	Let $X$ be a locally compact Hausdorff space. Let $W:=(w_{j})_{j \in \mathbb{Z}} $ be a sequence in $C_{b}(X) $ with $\sup_j\|w_{j}\|_{\infty}<\infty$.  Let $\alpha $ be a homeomorphism on $X$. Then, we have (1)$\Rightarrow$(2):
\begin{enumerate}
	\item $ T_{\alpha, W} $ is hypercyclic.
	\item  For every non-empty compact subset $K$ of $X$ and each non-empty finite subset $I$ of $\mathbb{N}$ there exists a strictly increasing sequence $\{ r_{k} \}_{k \in \mathbb{N}} $ in $\mathbb{N}$ such that 
	$$ \lim_{k\rightarrow\infty} (  \sup_{t\in K} \prod_{i=1}^{r_{k}} |( w_{i+j} \circ \alpha^{-i}) (t) |  ) = \lim_{k\rightarrow\infty} (  \sup_{t\in K}  \prod_{i=1}^{r_{k}} |( w_{j+1-i} \circ \alpha^{i-1})^{-1} (t) |=0   $$ 
	for every $j \in I$.
\end{enumerate}
Also, if for each $j$, $w_j> 0$ with $\sup_{j\in\mathbb{Z}}\|w_j^{-1}\|_\infty<\infty$, then (2)$\Rightarrow$(1). 
\end{corollary}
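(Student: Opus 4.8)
The plan is to obtain this corollary as a direct specialization of Theorem~\ref{thm1}, in the extended form provided by Remark~\ref{rem1}. I would set $\mathcal{E}:=C_b(X)$ and $\mathcal{A}:=C_0(X)$, take $\Phi:=\Phi_\alpha$, and let the weight sequence $W=(w_j)_j\subseteq C_b(X)$ play the role of $b$. First I would verify the hypotheses of Remark~\ref{rem1}: since $X$ is non-compact, $C_0(X)$ is a proper closed ideal of $C_b(X)$; every character of $C_0(X)$ is a point evaluation $h_t(f)=f(t)$ with $t\in X$, and this extends to evaluation at the same point $t$ on all of $C_b(X)$, so the characters of $\mathcal{A}$ extend to characters of $\mathcal{E}$; and $\Phi_\alpha$ is an isometric $\ast$-isomorphism of $C_b(X)$ with $\Phi_\alpha(C_0(X))=C_0(X)$ because $\alpha$ is a homeomorphism. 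With these in place, $T_{\alpha,W}=T_{\Phi_\alpha,W,1}$ is precisely the operator of Theorem~\ref{thm1}, acting on the module $\ell^2(C_0(X))$.

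The second step is a pure translation of the spectral condition (2) of Theorem~\ref{thm1} into the pointwise form stated here. The map $t\mapsto h_t$ is a homeomorphism of $X$ onto $\sigma(C_0(X))$, so compact subsets $K\subseteq\sigma(\mathcal{A})$ correspond exactly to compact subsets $K\subseteq X$. Since $\Phi_\alpha^{m}(f)=f\circ\alpha^{m}$ for every $m\in\mathbb{Z}$, evaluating at $h=h_t$ gives
$$\big|h\big(\Phi_\alpha^{-i}(w_{i+j})\big)\big|=\big|(w_{i+j}\circ\alpha^{-i})(t)\big|,\qquad \big|h\big(\Phi_\alpha^{i-1}(w_{j-i+1})\big)\big|^{-1}=\big|(w_{j+1-i}\circ\alpha^{i-1})^{-1}(t)\big|.$$
Substituting these identities into the two limits of Theorem~\ref{thm1}(2) reproduces verbatim the two limits in (2) of the corollary, so the implication $(1)\Rightarrow(2)$ is immediate.

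For the converse $(2)\Rightarrow(1)$ I would invoke the version of Theorem~\ref{thm1} furnished by Remark~\ref{rem1}, which drops the unital requirement and asks instead that the weights be invertible in the ambient algebra $\mathcal{E}$ with uniformly bounded inverses. The hypothesis $w_j>0$ together with $\sup_{j}\|w_j^{-1}\|_\infty<\infty$ is exactly this condition: each $w_j$ is invertible in $C_b(X)$ with $w_j^{-1}=1/w_j\in C_b(X)$, and $\sup_j\|w_j^{-1}\|_\infty<\infty$ amounts to $\inf_j\inf_{x\in X}w_j(x)>0$. This boundedness lets the backward shift $S_{\Phi_\alpha,W,1}(a)_j=\Phi_\alpha^{-1}(w_{j+1}^{-1}a_{j+1})$ be defined as a bounded operator on $\ell^2(C_0(X))$, using that $C_0(X)$ is an ideal, and the same translation of the spectral condition shows that both $T_{\alpha,W}^{r_k}$ and $S_{\Phi_\alpha,W,1}^{r_k}$ tend to $0$ pointwise on the dense set $\mathcal{F}$, which yields hypercyclicity exactly as in the proof of Theorem~\ref{thm1}.

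The only genuinely non-routine point I anticipate is confirming that Remark~\ref{rem1}'s framework really does apply to the pair $(C_0(X),C_b(X))$—specifically that the point-evaluation characters of $C_0(X)$ extend to characters of $C_b(X)$ and that $\Phi_\alpha$ preserves the ideal $C_0(X)$. Everything else reduces to the identity $\Phi_\alpha^{m}(f)=f\circ\alpha^{m}$, which converts each abstract factor $h(\Phi_\alpha^{m}(w_\ell))$ into the composition $(w_\ell\circ\alpha^{m})(t)$, so that the two displayed limits match their abstract counterparts term by term.
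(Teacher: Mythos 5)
Your proposal is correct and follows exactly the route the paper intends: the paper states Corollary~\ref{cor0} as ``a direct conclusion of Theorem~\ref{thm1} and Remark~\ref{rem1}'' with $\mathcal{E}=C_b(X)$, $\mathcal{A}=C_0(X)$ and $\Phi=\Phi_\alpha$, which is precisely your specialization. You in fact supply more detail than the paper does---verifying that point-evaluation characters of $C_0(X)$ extend to $C_b(X)$, that $\Phi_\alpha$ preserves the ideal, and that $h_t(\Phi_\alpha^m(w_\ell))=(w_\ell\circ\alpha^m)(t)$ converts the spectral condition verbatim---all of which is accurate.
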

\begin{example}\label{ex1}
	Let $X:=\mathbb{R}$ and $M>1$. Pick an $\epsilon>0$ such that $1+\epsilon<M$ and $1-\epsilon>\frac{1}{M}$. Let $\alpha:\mathbb{R}\rightarrow\mathbb{R}$ is defined by $\alpha(t):=t-1$. Let $(w_j)_{j\in\mathbb{Z}}$ be a sequence in $C_b(\mathbb{R})$ such that:
	\begin{itemize}
		\item $\|w_j\|_\infty\|w_j^{-1}\|_\infty<M$ for all $j\in\mathbb{Z}$,
		\item $|w_j(t)|\leq 1-\epsilon$ for all $j\in\mathbb{N}$ and $t\geq 0$,
		\item $|w_j(t)|\geq 1+\epsilon$ for all $j\in \mathbb{Z}\setminus\mathbb{N}$ and $t\leq 0$.
	\end{itemize}
	Then, the homeomorphism $\alpha$ and the sequence $(w_j)$ satisfy the conditions of Corollary \ref{cor0}.
\end{example}
Similar to the proof of Theorem \ref{thm1}, one can prove the following result.
\begin{theorem}\label{thm11}
		If $\mathcal{A}$ is unital and for each $j$, $b_j$ is invertible in $\mathcal{A}$ with $\sup_{j\in\mathbb{Z}}\|b_j^{-1}\|<\infty$, then (2)$\Rightarrow$(1):
	\begin{enumerate}
		\item The sequence $(T_{\Phi,b,n})_{n\in\mathbb{N}_0}$ is topologically mixing. 
		\item For every non-empty compact subset $K$ of $\sigma(\mathcal A)$ and each non-empty finite subset $I$ of $\mathbb{N}$,
		$$ \lim_{n\rightarrow\infty} \left(\sup_{h\in K} \prod_{i=1}^{n} \left|h\big(\Phi^{-i}(b_{i+j})\big)\right|\right)=\lim_{n\rightarrow\infty} \left(\sup_{h\in K} \prod_{i=1}^{n} \left|h\big(\Phi^{i-1}(b_{j-i+1})\big) \right|^{-1}\right)=0$$ 
		for all $j\in I$.
	\end{enumerate} 
\end{theorem}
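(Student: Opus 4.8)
The plan is to adapt, almost verbatim, the converse implication $(2)\Rightarrow(1)$ of Theorem \ref{thm1}, the only change being that the transitivity argument used there along a subsequence $(r_k)$ is now replaced by its counterpart for topological mixing, which demands the relevant limits along the \emph{whole} sequence $n\to\infty$. Since $T_{\Phi,b,n}=T_{\Phi,b,1}^n$, the sequence $(T_{\Phi,b,n})_{n\in\mathbb{N}_0}$ is the power orbit of the single operator $T:=T_{\Phi,b,1}$, so it suffices to show that $T$ is topologically mixing. I would again use the dense subspace $\mathcal{F}\subseteq\ell^2(\mathcal{A})$ of finitely supported sequences whose entries have compactly supported Gelfand transforms, together with the right inverse $S:=S_{\Phi,b,1}$ given by $(S(a))_j:=\Phi^{-1}(b_{j+1}^{-1}a_{j+1})$, which is well defined precisely because $\mathcal{A}$ is unital and each $b_j$ is invertible with $\sup_j\|b_j^{-1}\|<\infty$. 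A direct check gives $TS=\mathrm{Id}$, whence $T^nS^n=T^{n-1}(TS)S^{n-1}=T^{n-1}S^{n-1}=\cdots=\mathrm{Id}$ for every $n$.

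Next, I would fix $y=(y_j)_j\in\mathcal{F}$, take $J$ with $y_j=0$ for $j\notin I_J$, and set $K:=\bigcup_{j=-J}^J\mathrm{supp}\,\widehat{y_j}$, a compact subset of $\sigma(\mathcal{A})$. Exactly as in the proof of Theorem \ref{thm1}, for every index $j$ and every $n$ one obtains
$$\|(T^n(y))_{j+n}\|\leq\Big(\sup_{h\in K}\prod_{i=1}^{n}|h(\Phi^{-i}(b_{i+j}))|\Big)\|y_j\|,$$
$$\|(S^n(y))_{j+n}\|\leq\Big(\sup_{h\in K}\prod_{i=1}^{n}|h(\Phi^{i-1}(b_{j+1-i}))|^{-1}\Big)\|y_j\|,$$
where in the second line I use that $h$ is a character and $\Phi$ a $\ast$-isomorphism, so that $|h(\Phi^{i-1}(b_{j+1-i}^{-1}))|=|h(\Phi^{i-1}(b_{j+1-i}))|^{-1}$. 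Applying hypothesis (2) to $K$ and to the finite set of indices $j$ with $y_j\neq0$, both suprema now tend to $0$ as $n\to\infty$ along the full sequence; since $y$ has only finitely many nonzero entries and all other coordinates of $T^n(y)$ and $S^n(y)$ vanish, the $\ell^2$-norms are finite sums of terms each tending to zero, whence $T^n(y)\to0$ and $S^n(y)\to0$ in $\ell^2(\mathcal{A})$.

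Having verified, for the \emph{entire} sequence $n\to\infty$, that $T^n(y)\to0$ and $S^n(y)\to0$ on the dense set $\mathcal{F}$, together with the exact identity $T^nS^n(y)=y$, I would invoke the Mixing Criterion (the topological-mixing analogue of the Gethner--Shapiro/Kitai criterion, see \cite{gpbook}) to conclude that $T$, and therefore the sequence $(T_{\Phi,b,n})_{n\in\mathbb{N}_0}$, is topologically mixing. The only genuinely new ingredient relative to Theorem \ref{thm1} is the passage from a subsequence to the full sequence, so I expect the point needing the most care to be merely the bookkeeping ensuring that the full-sequence form of condition (2) is exactly what feeds the two estimates above; once those bounds are recorded, mixing rather than mere transitivity follows immediately from the $n\to\infty$ limits assumed in (2).
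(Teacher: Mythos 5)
Your proposal is correct and is precisely the argument the paper intends: the paper gives no separate proof of this theorem beyond the remark that it is ``similar to the proof of Theorem \ref{thm1}'', and your adaptation --- running the $(2)\Rightarrow(1)$ estimates for $T^n$ and $S^n$ on the dense set $\mathcal{F}$ along the full sequence $n\to\infty$ and invoking the mixing form of Kitai's criterion --- is exactly that adaptation. Your explicit verification of $T^nS^n=\mathrm{Id}$ is a detail the paper leaves implicit, but the approach is the same.
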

Let $\mathcal{E}$ be a unital commutative C$^\ast$-algebra with norm $\|\cdot\|$, and  $\mathcal A$ be a closed proper ideal of $\mathcal E$ such that the characters of $\mathcal A$ can be extended to characters on $\mathcal{E}$. Assume that $\Phi:\mathcal{E}\rightarrow\mathcal{E}$ is an isometric $\ast$-isomorphism such that $\Phi(\mathcal{A})=\mathcal{A}$.  Fix an element $b\in \mathcal{E}$  and define 
$$U_{\Phi,b}:\mathcal{A}\rightarrow\mathcal{A},\qquad U_{\Phi,b}(a):=b\,\Phi(a)$$
for all $a\in\mathcal{A}$. Then, $U_{\Phi,b}$ is a bounded linear operator.
Next, we characterize some classes of hypercyclic $U_{\Phi,b}$ operators.
\begin{theorem}\label{thm2}
	Under the above notations, let $b\in \mathcal{E}$ be invertible. Assume that for each compact subset $K$ of $\sigma(\mathcal{A})$ there is some $N>0$ such that 
	$$\{\gamma\circ\Phi^n:\,\gamma\in K,\,n\geq N\}\bigcap K=\varnothing.$$
	Then, the followings are equivalent:
	\begin{enumerate}
		\item  $U_{\Phi,b}$ is hypercyclic on $\mathcal{A}$.
		\item  For every compact set $K\subseteq\sigma(\mathcal{A})$ there exists a strictly increasing sequence $(n_{k})_{k} \subseteq \mathbb{N}$ such that  
		$$\lim_{k \rightarrow \infty} (   \sup_{\gamma\in K}\prod_{j=0}^{n_{k}-1}  |\gamma(\Phi^{j-n_{k}}(b) )|) = 
		\lim_{k \rightarrow \infty} (   \sup_{\gamma\in K}\prod_{j=0}^{n_{k}-1}  |\gamma(\Phi^{j}(b))|^{-1}) =0.$$
	\end{enumerate}
\end{theorem}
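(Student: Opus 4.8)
The plan is to identify $U_{\Phi,b}$, via the Gelfand transform, with a weighted composition operator on $C_0(\sigma(\mathcal A))$ and then to run the Hypercyclicity Criterion for one implication and an approximation/transitivity argument for the other, exactly in the spirit of the proof of Theorem~\ref{thm1}. First I would record the iterates. Writing $X=\sigma(\mathcal A)$ and $\widehat a(\gamma)=\gamma(a)$, an easy induction gives $U_{\Phi,b}^n(a)=\big[\prod_{j=0}^{n-1}\Phi^j(b)\big]\Phi^n(a)$, hence $\widehat{U_{\Phi,b}^n(a)}(\gamma)=\big[\prod_{j=0}^{n-1}\gamma(\Phi^j(b))\big]\,\widehat a(\gamma\circ\Phi^n)$. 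Since $b$ is invertible in $\mathcal E$ and $\mathcal A$ is an ideal of $\mathcal E$, the formula $S(a):=\Phi^{-1}(b^{-1}a)$ defines a map $S:\mathcal A\to\mathcal A$ with $U_{\Phi,b}\circ S=S\circ U_{\Phi,b}=\mathrm{id}$, so $S=U_{\Phi,b}^{-1}$ and $\widehat{S^n(a)}(\gamma)=\big[\prod_{j=1}^{n}\gamma(\Phi^{-j}(b))\big]^{-1}\widehat a(\gamma\circ\Phi^{-n})$. The two reindexings $\prod_{j=0}^{n-1}\gamma(\Phi^{j-n}(b))=\prod_{i=1}^{n}\gamma(\Phi^{-i}(b))$ and $\prod_{j=1}^{n}(\delta\circ\Phi^{n})(\Phi^{-j}(b))^{-1}=\prod_{i=0}^{n-1}\delta(\Phi^i(b))^{-1}$ are precisely what connect these transforms to the two products appearing in condition (2).

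For (2)$\Rightarrow$(1) I would use the dense set $\mathcal F$ of those $y\in\mathcal A$ whose Gelfand transform has compact support. Given nonempty open sets and representatives $u,v\in\mathcal F$, I pick a compact $K\supseteq\mathrm{supp}\,\widehat u\cup\mathrm{supp}\,\widehat v$ and the sequence $(n_k)$ provided by (2). Because $\widehat{U_{\Phi,b}^{n_k}(u)}(\gamma)$ is supported where $\gamma\circ\Phi^{n_k}\in K$, substituting $\delta=\gamma\circ\Phi^{n_k}\in K$ yields $\|U_{\Phi,b}^{n_k}(u)\|\le\big(\sup_{\delta\in K}\prod_{i=1}^{n_k}|\delta(\Phi^{-i}(b))|\big)\|u\|\to 0$ by the first limit, and symmetrically $\|S^{n_k}(v)\|\le\big(\sup_{\delta\in K}\prod_{i=0}^{n_k-1}|\delta(\Phi^i(b))|^{-1}\big)\|v\|\to0$ by the second. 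Since $U_{\Phi,b}^{n_k}S^{n_k}=\mathrm{id}$, the vector $x=u+S^{n_k}(v)$ satisfies $x\to u$ and $U_{\Phi,b}^{n_k}(x)\to v$, so every pair of nonempty open sets is linked and $U_{\Phi,b}$ is topologically transitive, hence hypercyclic. I note that this direction does not invoke the runaway hypothesis.

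For (1)$\Rightarrow$(2), fix a compact $K$ and, using that $\sigma(\mathcal A)$ is locally compact Hausdorff, choose $y\in\mathcal A$ with $\widehat y\equiv1$ on $K$ and $K_0:=\mathrm{supp}\,\widehat y$ compact. As a hypercyclic operator is densely hypercyclic, for each $k$ I take a hypercyclic vector $a^{(k)}$ with $\|a^{(k)}-y\|<2^{-k}$ and then, the tail of a dense orbit being dense, an index $n_k>\max\{n_{k-1},N\}$ with $\|U_{\Phi,b}^{n_k}(a^{(k)})-y\|<2^{-2k}$, where $N$ is furnished by the runaway assumption applied to $K\cup K_0$. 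Evaluating $\widehat{U_{\Phi,b}^{n_k}(a^{(k)})}$ at $\gamma\circ\Phi^{-n_k}$ gives $\big[\prod_{i=1}^{n_k}\gamma(\Phi^{-i}(b))\big]\widehat{a^{(k)}}(\gamma)$; the runaway condition forces $\gamma\circ\Phi^{-n_k}\notin K_0$, so this quantity is $<2^{-2k}$, while $|\widehat{a^{(k)}}(\gamma)|>\tfrac12$ on $K$, producing the first limit. Evaluating instead at $\gamma\in K$ gives $\big[\prod_{j=0}^{n_k-1}\gamma(\Phi^j(b))\big]\widehat{a^{(k)}}(\gamma\circ\Phi^{n_k})$, which has modulus $>\tfrac34$, while $\gamma\circ\Phi^{n_k}\notin K_0$ forces $|\widehat{a^{(k)}}(\gamma\circ\Phi^{n_k})|<2^{-k}$; dividing yields the second limit.

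The main obstacle is the forward implication, and in particular the role of the runaway hypothesis: without it there is no reason for the shifted characters $\gamma\circ\Phi^{\pm n_k}$ to leave the support $K_0$ of $\widehat y$, and the cancellations that convert the two approximation estimates into the two product limits would break down. The remaining work is essentially bookkeeping, namely keeping the two index substitutions of the first paragraph straight and verifying that the tail of a dense orbit is dense so that the $n_k$ can be taken strictly increasing and $\ge N$.
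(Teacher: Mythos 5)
Your proposal is correct and follows essentially the same route as the paper: the forward implication uses a bump element $u$ with $\widehat u\equiv 1$ on $K$, approximating hypercyclic vectors, and the runaway hypothesis to kill the terms $\widehat u(\gamma\circ\Phi^{\pm n_k})$, yielding the two product estimates with the same constants; the converse shows $U_{\Phi,b}^{n_k}$ and its inverse $V_{\Phi,b}(a)=\Phi^{-1}(b^{-1}a)$ tend to $0$ on the dense set of elements with compactly supported Gelfand transform. The only difference is that you spell out the final transitivity step ($x=u+S^{n_k}(v)$), which the paper leaves implicit.
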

\begin{proof}
	Assume that $(1)$ holds and let $K \subseteq\sigma(\mathcal{A})$ be compact. Then, there exists some element $u \in \mathcal{A}$ such that ${\rm supp}u$ is a compact subset of $\sigma(\mathcal{A})$, and $\hat{u}=1$ on $K$. Since $U_{\Phi,b}$ is hypercyclic on $\mathcal{A},$ there exists a  hypercyclic vector $v$ for $U_{\Phi,b}$ such that $ \| v-u \|< \frac{1}{2}$. Next, by the assumption, we can find an $N \in \mathbb{N}$ such that
	\begin{equation}\label{eq11}
	\{\gamma\circ\Phi^n:\,\gamma\in {\rm supp}\hat{u},\,n\geq N\}\bigcap {\rm supp}\hat{u}=\varnothing.
	\end{equation}
	Since $v$ is a hypercyclic vector for $U_{\Phi,b}$,  we can find some $n_{1} \geq N$ such that 
	$$\| \left[\prod_{j=0}^{n_{1}-1} \Phi^{j}(b)\right]\, \Phi^{n_{1}}(v) - u \|=\| U_{\Phi,b}^{n_{1}} (v) - u \|< \frac{1}{4}.$$
	Hence 
	
	$$ \| \left[\prod_{j=0}^{n_{1}-1} \Phi^{j-n_{1}}(b)\right]\,v-\Phi^{-n_{1}}(u)\|<\frac{1}{4},$$
	so	
	$$\sup_{\gamma\in K} \left|\left[\prod_{j=0}^{n_{1}-1} \gamma(\Phi^{j-n_{1}}(b))\right]\,\gamma(v)-\gamma(\Phi^{-n_{1}}(u))\right|<\frac{1}{4}.$$
	
	However, $\gamma(\Phi^{-n_{1}}(u))=0$ for all $\gamma\in K$ because of \eqref{eq11}. Hence 	
	$$\sup_{\gamma\in K} \left|\left[\prod_{j=0}^{n_{1}-1} \gamma(\Phi^{j-n_{1}}(b))\right]\,\gamma(v)\right|<\frac{1}{4}.$$
	However, since $\|v-u\|<\frac{1}{2}$ and $\hat{u}=1$ on $K$, we must have that $|\gamma(v)|>\frac{1}{2}$ for all $\gamma\in K$. Hence 
	
	$$\sup_{\gamma\in K} \left|\prod_{j=0}^{n_{1}-1} \gamma(\Phi^{j-n_{1}}(b))\right|<\frac{1}{2}.$$
	Proceeding inductively, for each $k \in \mathbb{N}$ one can find a hypercyclic vector $v^{(k)}$ such that $ \|v^{(k)} - u \|<\frac{1}{2^{k}}$ and some $n_{k}>n_{k-1}>\cdots>n_{1}> N$ such that 
	$$\|U_{\Phi,b}^{n_{k}} (g^{(k)}) - u \|<\frac{1}{4^{k}}.$$
	By the above arguments we deduce that 
	$$\sup_{\gamma\in K} \left|\prod_{j=0}^{n_{k}-1} \gamma(\Phi^{j-n_{k}}(b))\right|<\frac{1}{2^k}.$$
	For each $n\in\mathbb{N}$ we denote 
	$$E_n:=\{\gamma\circ\Phi^n:\,\gamma\in K\}.$$
	We have 
	$$\sup_{\gamma\in E_{n_{1}}} \left|\left[\prod_{j=0}^{n_{1}-1} \gamma(\Phi^{j-n_{1}}(b))\right]\,\gamma(v)-\gamma(\Phi^{-n_{1}}(u))\right|<\frac{1}{4}.$$
	Hence
	$$\left|\left[\prod_{j=0}^{n_{1}-1} \gamma(\Phi^{j-n_{1}}(b))\right]\,\gamma(v)\right| > \frac{3}{4} $$
	for all $\gamma \in E_{n_{1}}$, since $\hat{u}=1$ on $K$. However,
	$$\sup_{\gamma \in E_{n_{1}}} |\gamma(v)|<\frac{1}{2}$$
	because $E_{n_1}\cap {\rm supp}\hat{u}=\varnothing$.
	Hence, we get 
	$$\sup_{\gamma\in E_{n_{1}}}\prod_{j=0}^{n_{1}-1} |\gamma(\Phi^{j-n_{1}}(b))|^{-1}<\frac{2}{3},$$
	which gives
	$$\sup_{\gamma\in K}\prod_{j=0}^{n_{1}-1} |\gamma(\Phi^{j}(b))|^{-1}<\frac{2}{3}.$$
	
	Proceeding indirectively as above, we can construct a strictly increasing sequence $(n_{k})\subseteq \mathbb{N}$ such that 
	$$\sup_{\gamma\in K}\prod_{j=0}^{n_{k}-1} |\gamma(\Phi^{j}(b))|^{-1}< \frac{2^{k}}{2^{2k}-1} <  \frac{1}{2^{k}-1}.$$
	Assume now that $(2)$ holds and let $v\in \mathcal{A}$ such that ${\rm supp}\hat{v}$ is compact. Set $K:={\rm supp}\hat{v}$. Then,
	\begin{align*}
	\| U_{\Phi,b}^{n_{k}} (v)  \|&=\|\prod_{j=0}^{n_{k}-1}\Phi^{j}(b) \Phi^{n_{k}}(v)\|\\
	&=\|\prod_{j=0}^{n_{k}-1} \Phi^{j-n_{k}}(b)\,v\|\\
	&\leq \sup_{\gamma\in K} \left[\prod_{j=0}^{n_{k}-1} |\gamma(\Phi^{j-n_{k}}(b))|\right]\,\, \|v\|.
	\end{align*}
	Define $V_{\Phi,b}(a):=\Phi^{-1}(b^{-1}a)$ for all $a\in\mathcal{A}$. Then, $V_{\Phi,b}$ is the inverse of $U_{\Phi,b}$ and 
	\begin{align*}
	\| V_{\Phi,b}^{n_{k}}(v)\|&= \|\prod_{j=1}^{n_{k}}\Phi^{-j}(b^{-1}) \Phi^{-n_{k}}(v)) \|\\
	&=\|   \prod_{j=1}^{n_{k}} \Phi^{n_{k}-j}(b^{-1})\,\,\, v\|\\
	&=\|   \prod_{j=0}^{n_{k}-1} \Phi^{j}(b^{-1})\,\,\,v\|\\
	&\leq \sup_{\gamma\in K} \left[\prod_{j=0}^{n_{k}-1} |\gamma(\Phi^{j}(b^{-1}) )|\right]\,\,\|v\|.
	\end{align*}
	So, $U_{\Phi,b}$ and $V_{\Phi,b}$ are hypercyclic.
\end{proof}
Let $X$ be a topological space, and $W:=(w_{j})_{j \in \mathbb{Z}} $ be a sequence in $C_b(X) $ with $\sup_{j}\|w_{j}\|_{\infty}<\infty$.  Let $\alpha $ be a homeomorphism on $X$. We denote $U_{\alpha , W}:=U_{\Phi_\alpha,W}$ as an operator on $C_0(X)$. Now, we conclude the next fact directly from Theorem \ref{thm2}.
\begin{corollary}\label{cor2}
	Let $X$ be a locally compact non-compact Hausdorff space, and $W:=(w_{j})_{j \in \mathbb{Z}} $ be a sequence in $C_b(X) $ such that $w_j>0$ for all $j$, and  
	$$\sup_{j}\|w_j^{-1}\|_\infty+\sup_{j}\|w_{j}\|_{\infty}<\infty.$$
	Also, assume that for every compact subset $K$ of $X$ there exists some $N > 0$ such that $\alpha^{n} (K) \cap K =\varnothing $ for all $n \geq N$.
	Then ,the followings are equivalent:
	\begin{enumerate}
	\item $U_{\alpha,w}$ is hypercyclic on $C_0(X)$.
	\item For every compact subset $K$ of $X$ there exists a strictly increasing sequence $(n_{k}) \subseteq \mathbb{N}$ such that  
	$$\lim_{k \rightarrow \infty} (\sup_{t \in K}\prod_{j=0}^{n_{k}-1}  |(w \circ \alpha^{j-n_{k}})(t)|)=\lim_{k \rightarrow \infty} (\sup_{t \in K} \prod_{j=0}^{n_{k}-1}|(w \circ \alpha^{j})^{-1}(t)|)=0.$$
\end{enumerate}
\end{corollary}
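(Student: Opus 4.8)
The plan is to obtain Corollary \ref{cor2} as the concrete realization of Theorem \ref{thm2} in the commutative function-algebra setting, so that the entire argument reduces to a dictionary translation. I would set $\mathcal{E}:=C_b(X)$ and $\mathcal{A}:=C_0(X)$, take $\Phi:=\Phi_\alpha$ to be the composition $\ast$-isomorphism $\Phi_\alpha(f)=f\circ\alpha$, and let the distinguished invertible element of Theorem \ref{thm2} be $b:=w$. The task is then to check that each structural hypothesis and each of the two conditions of Theorem \ref{thm2} corresponds, term for term, to the statement of the corollary.

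First I would verify the standing assumptions of Theorem \ref{thm2} in this setting. The algebra $C_b(X)$ is a unital commutative C$^\ast$-algebra, and $C_0(X)$ is a closed ideal in it which is proper precisely because $X$ is non-compact (so $1\notin C_0(X)$). Every character of $C_0(X)$ is a point evaluation $\mathrm{ev}_t$ at some $t\in X$, and each such evaluation extends to the corresponding point evaluation on $C_b(X)$, so the extension hypothesis holds. Since $\alpha$ is a homeomorphism, $\Phi_\alpha$ is an isometric $\ast$-isomorphism of $C_b(X)$, and because $f$ vanishes at infinity if and only if $f\circ\alpha$ does, one has $\Phi_\alpha(C_0(X))=C_0(X)$. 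Finally, the positivity of $w$ together with the bound on $\|w^{-1}\|_\infty$ forces $w$ to be bounded away from zero, so $w^{-1}\in C_b(X)$ and $w$ is invertible in $\mathcal{E}=C_b(X)$; note that invertibility is only required in $\mathcal{E}$, not in the ideal $C_0(X)$.

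Next I would invoke the Gelfand identification $\sigma(C_0(X))\cong X$, under which compact subsets of the spectrum correspond to compact subsets of $X$ and a character $\gamma$ becomes $\mathrm{ev}_t$. The one computation that must be carried out carefully is the action of $\Phi_\alpha^n$ on characters: since $\Phi_\alpha^n(f)(x)=f(\alpha^n(x))$, one finds $(\gamma\circ\Phi_\alpha^n)(f)=f(\alpha^n(t))=\mathrm{ev}_{\alpha^n(t)}(f)$, that is $\gamma\circ\Phi_\alpha^n=\mathrm{ev}_{\alpha^n(t)}$. Consequently the orbit-separation hypothesis $\{\gamma\circ\Phi^n:\gamma\in K,\,n\geq N\}\cap K=\varnothing$ of Theorem \ref{thm2} becomes exactly $\alpha^n(K)\cap K=\varnothing$ for all $n\geq N$, matching the hypothesis of the corollary. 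The same substitution turns $\gamma(\Phi^{j-n_k}(b))$ into $(w\circ\alpha^{j-n_k})(t)$ and $\gamma(\Phi^{j}(b))^{-1}$ into $(w\circ\alpha^{j})^{-1}(t)$, where positivity of $w$ identifies the pointwise reciprocal with $w^{-1}$; hence the two limit conditions of Theorem \ref{thm2}(2) become verbatim the two limit conditions of the corollary.

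The proof is therefore essentially bookkeeping, and the only genuine point requiring attention—the step I would treat as the main obstacle—is getting the direction of the dynamics right, namely confirming that $\gamma\circ\Phi_\alpha^n$ corresponds to $\alpha^n(t)$ rather than $\alpha^{-n}(t)$, and likewise tracking the signs of the exponents $j-n_k$ and $j$ so that the two displayed products align with those of Theorem \ref{thm2}. Once these correspondences are pinned down, the equivalence (1)$\Leftrightarrow$(2) of the corollary follows immediately from the equivalence already established in Theorem \ref{thm2}.
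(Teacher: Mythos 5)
Your proposal is correct and is exactly the paper's intended argument: the paper states Corollary \ref{cor2} as a direct consequence of Theorem \ref{thm2} under the identifications $\mathcal{E}=C_b(X)$, $\mathcal{A}=C_0(X)$, $\Phi=\Phi_\alpha$, $b=w$, and $\sigma(C_0(X))\cong X$ via point evaluations. Your verification of the standing hypotheses (properness of the ideal from non-compactness, extension of characters, invertibility of $w$ in $C_b(X)$, and the translation $\gamma\circ\Phi_\alpha^n=\mathrm{ev}_{\alpha^n(t)}$) simply makes explicit the bookkeeping the paper leaves to the reader.
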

\begin{example}
	Let $G$ be a locally compact non-compact second countable group. Then, $G$ contains an aperiodic element. Hence, by \cite{cc11}, the mapping $\alpha_a:G\rightarrow G$ defined by $\alpha_a(x):=ax$ satisfies the condition of Corollary \ref{cor2}.
\end{example}
\begin{example}
	Let $X:=\mathbb{R}$ and assume that $M,\epsilon,\alpha$ are as in Example \ref{ex1}. If $w\in C_b(\mathbb{R})$ such that for some constant $k>0$, 
	we have $1+\epsilon\leq |w(t)|\leq M$ for all $t\leq -k$, and $\frac{1}{M}\leq |w(t)|\leq 1-\epsilon$ for all $t\geq k$. Then, the conditions of Corollary \ref{cor2} are satisfied.
\end{example}
Similar to Theorem \ref{thm2} the next result can be obtained.
\begin{theorem}\label{thm22}
Under the assumptions of Theorem \ref{thm2}, we have (2)$\Rightarrow$(1):
	\begin{enumerate}
		\item  $U_{\Phi,b}$ is topologically mixing on $\mathcal{A}$.
		\item  For every compact set $K\subseteq\sigma(\mathcal{A})$,
		$$\lim_{n\rightarrow \infty} (\sup_{\gamma\in K}\prod_{j=0}^{n-1}  |\gamma(\Phi^{j-n}(b) )|)= 
		\lim_{n \rightarrow \infty} (   \sup_{\gamma\in K}\prod_{j=0}^{n-1}  |\gamma(\Phi^{j}(b))|^{-1}) =0.$$
	\end{enumerate}
\end{theorem}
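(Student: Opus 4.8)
The plan is to upgrade the argument of Theorem~\ref{thm2} by replacing the hypercyclicity (Gethner--Shapiro) criterion, which works only along a subsequence, by the Kitai criterion, which uses the full sequence $n\to\infty$ and yields topological mixing. Recall the Kitai criterion (see \cite{gpbook}): if $T\in B(\mathcal X)$ admits a dense subset $D$ and a map $S\colon D\to D$ with $TSy=y$ for all $y\in D$, with $\|T^n x\|\to0$ for every $x\in D$ and $\|S^n y\|\to0$ for every $y\in D$, the limits taken along the entire sequence, then $T$ is topologically mixing. Since hypothesis (2) now forces the two products to vanish along the full sequence rather than merely a subsequence, the very estimates from the proof of Theorem~\ref{thm2} will deliver full-sequence convergence and hence mixing.

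Concretely, I would take $D$ to be the set of all $v\in\mathcal A$ whose Gelfand transform $\hat v$ has compact support; under the identification $\mathcal A\cong C_0(\sigma(\mathcal A))$ this is the dense subspace corresponding to $C_c(\sigma(\mathcal A))$. For such $v$ set $K:=\mathrm{supp}\,\hat v$. Repeating the computation of Theorem~\ref{thm2} verbatim gives
\begin{equation*}
\|U_{\Phi,b}^{n}(v)\|=\Big\|\prod_{j=0}^{n-1}\Phi^{j-n}(b)\,v\Big\|\le\Big(\sup_{\gamma\in K}\prod_{j=0}^{n-1}|\gamma(\Phi^{j-n}(b))|\Big)\,\|v\|,
\end{equation*}
and, with the inverse $V_{\Phi,b}(a):=\Phi^{-1}(b^{-1}a)$,
\begin{equation*}
\|V_{\Phi,b}^{n}(v)\|=\Big\|\prod_{j=0}^{n-1}\Phi^{j}(b^{-1})\,v\Big\|\le\Big(\sup_{\gamma\in K}\prod_{j=0}^{n-1}|\gamma(\Phi^{j}(b))|^{-1}\Big)\,\|v\|,
\end{equation*}
where I have used $\gamma(\Phi^{j}(b^{-1}))=\gamma(\Phi^{j}(b))^{-1}$. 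By hypothesis (2) both suprema tend to $0$ as $n\to\infty$, so $\|U_{\Phi,b}^{n}(v)\|\to0$ and $\|V_{\Phi,b}^{n}(v)\|\to0$ for every $v\in D$ along the full sequence.

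To close the argument I would verify the two remaining hypotheses of the Kitai criterion: that $U_{\Phi,b}V_{\Phi,b}=\mathrm{id}$, which is immediate from the definitions, and that $V_{\Phi,b}$ maps $D$ into itself, which holds because the homeomorphism of $\sigma(\mathcal A)$ dual to $\Phi$ together with multiplication by the invertible element $b^{-1}$ sends a compactly supported Gelfand transform to another one. I do not expect a genuine obstacle here: the computational content is already present in Theorem~\ref{thm2}, and the only point deserving attention is that the aperiodicity-type assumption $\{\gamma\circ\Phi^n:\gamma\in K,\ n\ge N\}\cap K=\varnothing$ was used there only in the direction (1)$\Rightarrow$(2), hence plays no role in the present implication; it is precisely the strengthening of (2) to full-sequence limits that lifts the conclusion from topological transitivity to topological mixing.
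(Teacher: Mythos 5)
Your proof is correct and follows exactly the route the paper intends: the paper omits the argument with the remark that it is ``similar to Theorem \ref{thm2}'', and what you write is precisely the $(2)\Rightarrow(1)$ computation from that proof run along the full sequence $n\to\infty$ and fed into Kitai's mixing criterion in place of the hypercyclicity criterion. Your side remarks --- that $V_{\Phi,b}$ preserves the dense set of elements with compactly supported Gelfand transform, and that the aperiodicity-type hypothesis of Theorem \ref{thm2} plays no role in this implication --- are both accurate.
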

\section{Hypercyclic operators on Segal algebras}

In sequel, we assume that $\mathcal{A} =C_{0}(\mathbb{R})$ and $ \tau \in C_{b}(\mathbb{R})$. Put
$$\mathcal{A}_\tau:=\{f\in\mathcal{A}:\, \sum_{k=0}^\infty\|f\tau^k\|_{\infty} <\infty\}.$$
For each $f\in \mathcal{A}_\tau$ we define
$$\|f\|_\tau:=\sum_{k=0}^\infty\|f\tau^k\|_{\infty}.$$
Then, $\mathcal{A}_\tau$ is a Banach algebra \cite{tak}. We will call this algebra \emph{Segal algebra corresponding to $\tau$.}


For a homeomorphism $\alpha $ of $\mathbb{R} ,$ we consider corresponding Segal algebra $\mathcal{A}_{\tau \circ \alpha^{k}}$ for all $k \in \mathbb{Z}$. 
Clearly, $f \cdot (\tau \circ \alpha^{k}) \in \mathcal{A} $ for all $k \in \mathbb{Z} $ whenever $f \in C_{0}(\mathbb{R}) $ since $\tau \circ\alpha^{k}  \in C_{b}(\mathbb{R}) $ for all $  k \in \mathbb{Z} .$ 
By definition, $\| f \|_{\infty} \leq \|f \|_{\tau \circ \alpha^{k}} $ whenever $ f \in \mathcal{A}_{\tau \circ \alpha^{k}} .$

\begin{lemma}\label{lem1}
	If $f \in \mathcal{A}_{\tau} ,$ then $ | f |^{-1} ((0, \infty)) \cap | \tau |^{-1}  ( \left[1,\infty \right)  )=\varnothing.$ 
\end{lemma}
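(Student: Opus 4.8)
The plan is to argue by contradiction, since the statement is really an assertion that no single point can simultaneously witness $f\neq 0$ and $|\tau|\geq 1$. First I would unravel the preimage notation: $|f|^{-1}((0,\infty))$ is the set $\{x\in\mathbb{R}:|f(x)|>0\}=\{x:f(x)\neq 0\}$, while $|\tau|^{-1}([1,\infty))$ is $\{x\in\mathbb{R}:|\tau(x)|\geq 1\}$. So the claim is that whenever $f(x)\neq 0$ one must have $|\tau(x)|<1$. Accordingly, I would suppose the intersection is non-empty and fix a point $x_0$ lying in both sets, so that $|f(x_0)|>0$ and $|\tau(x_0)|\geq 1$ hold at once.

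The key observation is that the condition $|\tau(x_0)|\geq 1$ prevents the powers $|\tau(x_0)|^k$ from decaying: one has $|\tau(x_0)|^k\geq 1$ for every $k\in\mathbb{N}_0$. Evaluating $f\tau^k$ at the single point $x_0$ and passing to the supremum therefore gives
$$\|f\tau^k\|_\infty \geq |f(x_0)\,\tau(x_0)^k| = |f(x_0)|\,|\tau(x_0)|^k \geq |f(x_0)|$$
for all $k\geq 0$. Since $|f(x_0)|>0$ is a fixed positive constant independent of $k$, summing over $k$ yields
$$\sum_{k=0}^\infty \|f\tau^k\|_\infty \geq \sum_{k=0}^\infty |f(x_0)| = +\infty,$$
which contradicts the defining finiteness condition $\sum_{k=0}^\infty\|f\tau^k\|_\infty<\infty$ of membership in $\mathcal{A}_\tau$. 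Hence no such $x_0$ exists and the two sets are disjoint.

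I do not expect any genuine obstacle in this argument: the whole proof rests on the elementary fact that a geometric factor $|\tau(x_0)|^k$ with base at least $1$ cannot shrink, so one bad point already forces the series to diverge. The only step requiring a moment of care is the correct reading of the preimage sets, after which the contradiction is immediate and the lemma follows.
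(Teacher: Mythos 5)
Your proposal is correct and is essentially the paper's own argument in contrapositive form: both rest on the pointwise bound $|f(x_0)|\,|\tau(x_0)|^k \le \|f\tau^k\|_\infty$ and the observation that summing over $k$ forces divergence (equivalently, forces the geometric series $\sum_k |\tau(x_0)|^k$ to converge, hence $|\tau(x_0)|<1$) whenever $|f(x_0)|>0$. No gaps; the argument is complete.
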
	

\begin{proof}
	For each $x \in \mathbb{R} ,$ we have 
	$$| f(x) | \sum_{k =1}^{\infty}  | \tau(x) |^{k}=\sum_{k =1}^{\infty} | f(x) | \text{ } | \tau(x) |^{k} 
	\leq \sum_{k =1}^{\infty}  \| f\tau^{k} \|_{\infty} =  \| f \|_{\tau} < \infty.$$ 
	Hence, if $| f(x) |  >0 ,$ then the series $\sum_{k =1}^{\infty}  | \tau(x) |^{k}  $ is convergent. 
\end{proof}

\begin{lemma}
	Let $\epsilon_{1}, \epsilon_{2} \in (0,1) $ with $\epsilon_{2} < \epsilon_{1} .$ Then for every compact subset $K_{\epsilon_{2}}^{(\tau)} \subseteq | \tau |^{-1} ( \left[ 0,\epsilon_{2} \right]) ,$ there exists a function $\mu_{K_{\epsilon_{2}}^{(\tau)}, \epsilon_{1} }  \in \mathcal{A}_{\tau}$ such  that the followings hold:
	\begin{enumerate}
	\item  $\mu_{K_{\epsilon_{2}}^{(\tau)}, \epsilon_{1} } =1$ on $K_{\epsilon_{2}}^{(\tau)}$,
	 
\item  $\mu_{K_{\epsilon_{2}}^{(\tau)}, \epsilon_{1} } \in C_{c}(\mathbb{R})  ,$
	 
	\item  ${\rm supp}\,\mu_{K_{\epsilon_{2}}^{(\tau)}, \epsilon_{1} }  \subseteq | \tau |^{-1} ( \left[ 0,\epsilon_{1} \right])) .$
	\end{enumerate}
\end{lemma}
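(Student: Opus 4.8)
The plan is to build $\mu$ by Urysohn's lemma and then to recognise membership in $\mathcal{A}_\tau$ from a geometric--series bound. The key preliminary observation is that the strict inequality $\epsilon_2<\epsilon_1$ gives room to separate $K_{\epsilon_2}^{(\tau)}$ from the level set $\{|\tau|=\epsilon_1\}$: for every $x\in K_{\epsilon_2}^{(\tau)}$ we have $|\tau(x)|\le\epsilon_2<\epsilon_1$, so $K_{\epsilon_2}^{(\tau)}$ is contained in the \emph{open} set $U:=|\tau|^{-1}\big([0,\epsilon_1)\big)$ (open because $|\tau|$ is continuous). Since $U\subseteq|\tau|^{-1}([0,\epsilon_1])$, any function supported inside $U$ will automatically meet requirement (3).

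With this in hand I would invoke the locally compact version of Urysohn's lemma. As $\mathbb{R}$ is locally compact Hausdorff, $K_{\epsilon_2}^{(\tau)}$ is compact, and $U$ is open with $K_{\epsilon_2}^{(\tau)}\subseteq U$, there is a function $\mu:=\mu_{K_{\epsilon_2}^{(\tau)},\epsilon_1}$ satisfying $0\le\mu\le1$, $\mu\equiv1$ on $K_{\epsilon_2}^{(\tau)}$, $\mu\in C_c(\mathbb{R})$, and ${\rm supp}\,\mu\subseteq U$. This delivers (1), (2) and (3) at once. The only delicate point is that I need $\mu$ to have \emph{compact} support; this is precisely where local compactness enters, via first choosing an open $V$ with compact closure such that $K_{\epsilon_2}^{(\tau)}\subseteq V\subseteq\overline{V}\subseteq U$ and then applying the classical Urysohn lemma on the compact normal set $\overline{V}$.

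The decisive step, and the reason the construction lands in $\mathcal{A}_\tau$ rather than merely in $C_c(\mathbb{R})$, is to check $\sum_{k=0}^\infty\|\mu\tau^k\|_\infty<\infty$. Here I would use that $\mu$ vanishes off ${\rm supp}\,\mu\subseteq U$, where $|\tau(x)|<\epsilon_1<1$, together with $\|\mu\|_\infty\le1$: for each $k$ one gets $\|\mu\tau^k\|_\infty=\sup_{x\in{\rm supp}\,\mu}|\mu(x)|\,|\tau(x)|^k\le\epsilon_1^k$, whence $\|\mu\|_\tau\le\sum_{k=0}^\infty\epsilon_1^k=(1-\epsilon_1)^{-1}<\infty$ because $\epsilon_1\in(0,1)$. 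Thus $\mu\in\mathcal{A}_\tau$ and the proof is complete. I do not anticipate a real obstacle: the whole content is that confining the support to $\{|\tau|<\epsilon_1\}$ dominates the defining series of the $\tau$--norm by a convergent geometric series, while the two hypotheses are used in complementary ways, $\epsilon_2<\epsilon_1$ to produce the open neighbourhood and $\epsilon_1<1$ to guarantee summability.
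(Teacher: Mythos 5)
Your proposal is correct and follows essentially the same route as the paper: separate $K_{\epsilon_2}^{(\tau)}$ from $|\tau|^{-1}([\epsilon_1,\infty))$, apply Urysohn's lemma to get a $[0,1]$-valued function in $C_c(\mathbb{R})$ equal to $1$ on $K_{\epsilon_2}^{(\tau)}$ and supported where $|\tau|\le\epsilon_1$, and then bound $\|\mu\tau^k\|_\infty\le\epsilon_1^k$ to sum a geometric series. The only cosmetic difference is that the paper forces compact support by explicitly cutting with $[-N,N]\supseteq K_{\epsilon_2}^{(\tau)}$ and using the normal-space Urysohn lemma, whereas you invoke the locally compact version directly; both are equally valid.
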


\begin{proof}
	Since $K_{\epsilon_{2}}^{(\tau)} $ is compact, there exists some $N \in \mathbb{N} $ such that $K_{\epsilon_{2}}^{(\tau)} \subseteq  \left[ -N,N \right] .$ Then 
	$$K_{\epsilon_{2}}^{(\tau)} \cap ( \text{ } |\tau|^{-1} ( \left[ \epsilon_{1},\infty \right) )     \cup   \left( - \infty,-N-1 \right]   \cup  \left[ N+1,\infty \right)  \text{ })  = \varnothing ,$$ 
	as $ K_{\epsilon_{2}}^{(\tau)} \subseteq |\tau|^{-1} (\left[ 0,\epsilon_{2} \right])  \cap \left[ -N,N \right]  .$ Since 
	$|\tau|^{-1} ( \left[ \epsilon_{1},\infty \right] ) \cup \left( - \infty,-N-1 \right]   \cup  \left[ N+1,\infty \right)  $  is closed, by Urisohn's lemma, there exists a continuous function $\mu_{K_{\epsilon_{2}}^{(\tau)}, \epsilon_{1} }^{(\tau)} $ such that 
	
	$$\mu_{ K_{\epsilon_{2}}^{(\tau)}, \epsilon_{1} }=1\,\text{ on } K_{\epsilon_{2}}^{(\tau)}, \quad 0 \leq  \mu_{  { K_{\epsilon_{2}}^{(\tau)}, \epsilon_{1} }  }^{(\tau)} \leq 1 \,\,\text{ and }$$ 
	
	$$ \mu_{K_{\epsilon_{2}}^{(\tau)}, \epsilon_{1}}=0\,\text{ on } |\tau|^{-1} ( \left[ \epsilon_{1},\infty \right] ) \cup \left( - \infty,-N-1 \right]   \cup[ N+1,\infty).$$
	It follows that ${\rm supp}\,\mu_{K_{\epsilon_{2}}^{(\tau)}, \epsilon_{1} }\subseteq \left[ -N-1,N+1 \right] ,$ so $\mu_{K_{\epsilon_{2}}^{(\tau)}, \epsilon_{1} }\in C_{c}(\mathbb{R}) .$ Moreover, we have
	 ${\rm supp}\,\mu_{K_{\epsilon_{2}}^{(\tau)}, \epsilon_{1} }\subseteq |\tau|^{-1} ( \left[ 0,\epsilon_{1} \right] ) $ and $   \| \mu_{K_{\epsilon_{2}}^{(\tau)}, \epsilon_{1} }\|_{\infty}=1$. This implies that $$| \mu_{K_{\epsilon_{2}}^{(\tau)}, \epsilon_{1} }^{(\tau)} (x) \tau^{n}(x)| \leq \epsilon_{1}^{n} $$
	  for all $x \in \mathbb{R} $ and $n \in \mathbb{N}$, so $\| \mu_{K_{\epsilon_{2}}^{(\tau)}, \epsilon_{1} }\tau^{n} \|_{\infty} \leq  \epsilon_{1}^{n}  $ for all $n \in  \mathbb{N}.$ As $\epsilon_{1} <1 ,$ we get that $\mu_{K_{\epsilon_{2}}^{(\tau)}, \epsilon_{1} }\in \mathcal{A}_{\tau} .$ 
\end{proof}
\begin{lemma}\label{lem2}
	For each $f\in \mathcal{A}_{\tau}$, there is a sequence $(g_n)$ in the set$ \{\mu_{K_{\epsilon_{2}}^{(\tau)}, \epsilon_{1} }:\,\, 0<\epsilon_{2}<\epsilon_{1}<1,\,\,K_{\epsilon_{2}}^{(\tau)} \text{ is  compact}\}$ such that $fg_n\rightarrow f$ in $\mathcal{A}_{\tau}$.
\end{lemma}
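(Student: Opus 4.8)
The plan is to choose the $g_m$ so that they equal $1$ on larger and larger compact pieces of $\{f\neq 0\}$, and then pass to the limit in the series defining $\|\cdot\|_\tau$ by a dominated-convergence argument. First I would record the termwise domination: since $0\le \mu_{K_{\epsilon_{2}}^{(\tau)},\epsilon_{1}}\le 1$ (from the preceding lemma), any admissible $g$ satisfies $\|f(1-g)\tau^k\|_\infty\le \|f\tau^k\|_\infty$ for every $k$, and $\sum_{k}\|f\tau^k\|_\infty=\|f\|_\tau<\infty$. Hence, writing $\|f-fg_m\|_\tau=\sum_{k=0}^\infty\|f(1-g_m)\tau^k\|_\infty$, the dominated convergence theorem for series (counting measure on $\mathbb{N}_0$) reduces the entire statement to showing that for each fixed $k$ one has $\|f(1-g_m)\tau^k\|_\infty\to 0$ as $m\to\infty$.

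Next I would fix the concrete exhaustion. For $m\ge 2$ put $\epsilon_{2}^{(m)}:=1-\tfrac1m$, $\epsilon_{1}^{(m)}:=1-\tfrac1{2m}$, and
\[
K_m:=\{x:\,|f(x)|\ge \tfrac1m\}\cap\{x:\,|\tau(x)|\le 1-\tfrac1m\}.
\]
Since $f\in C_0(\mathbb{R})$ the first set is compact, so $K_m$ is a compact subset of $|\tau|^{-1}([0,\epsilon_{2}^{(m)}])$ and $0<\epsilon_{2}^{(m)}<\epsilon_{1}^{(m)}<1$; thus the preceding lemma provides $g_m:=\mu_{K_m,\epsilon_{1}^{(m)}}$, which lies in the required set, equals $1$ on $K_m$, and is supported in $\{|\tau|\le \epsilon_{1}^{(m)}\}$. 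The key ingredient I would then isolate is the pointwise bound $|f(x)|\le \|f\|_\tau\,(1-|\tau(x)|)$, valid whenever $f(x)\neq 0$: by Lemma \ref{lem1} such an $x$ has $|\tau(x)|<1$, and summing the inequalities $\|f\tau^k\|_\infty\ge |f(x)|\,|\tau(x)|^k$ over $k$ gives $\|f\|_\tau\ge |f(x)|/(1-|\tau(x)|)$.

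Finally I would estimate $\|f(1-g_m)\tau^k\|_\infty$ for fixed $k$. Only points $x$ with $f(x)\neq 0$ and $x\notin K_m$ contribute, since $g_m=1$ on $K_m$. For such an $x$, either $|f(x)|<\tfrac1m$, or else $|\tau(x)|>1-\tfrac1m$, in which case the pointwise bound yields $|f(x)|<\|f\|_\tau/m$; in both cases $|f(x)|\,|1-g_m(x)|\,|\tau(x)|^k\le \tfrac1m\max(1,\|f\|_\tau)\,\|\tau\|_\infty^k$. Therefore $\|f(1-g_m)\tau^k\|_\infty\le m^{-1}\max(1,\|f\|_\tau)\|\tau\|_\infty^k\to 0$ for each fixed $k$, which together with the domination above completes the proof. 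I expect the main obstacle to be exactly the region where $f$ is not small but $|\tau|$ is close to $1$: there $g_m$ is forced to vanish, because its support avoids $\{|\tau|\ge\epsilon_{1}^{(m)}\}$, so a naive cutoff cannot help, and it is precisely the estimate $|f(x)|\le\|f\|_\tau(1-|\tau(x)|)$ coming from membership in $\mathcal{A}_\tau$ that rescues the bound.
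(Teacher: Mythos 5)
Your proof is correct and follows essentially the same route as the paper's: both hinge on the inequality $|f(x)|\le\|f\|_\tau\,(1-|\tau(x)|)$ (coming from Lemma \ref{lem1} and membership in $\mathcal{A}_\tau$) together with the cutoffs $\mu_{K,\epsilon_1}$ and the termwise domination $\|f(1-g)\tau^k\|_\infty\le\|f\tau^k\|_\infty$. Your dominated-convergence packaging and your explicit intersection of $\{|f|\ge\tfrac1m\}$ with $\{|\tau|\le 1-\tfrac1m\}$ are only cosmetic variants of the paper's explicit tail split at $N$ and its derived inclusion $\{|f|\ge\epsilon\}\subseteq|\tau|^{-1}([0,1-\epsilon/\|f\|_\tau])$.
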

\begin{proof}
	Let $\delta>0$ and $f \in \mathcal{A}_{\tau}$ with $f \neq 0$. Then there exists an $N \in \mathbb{N}$ such that 
	$$ \sum_{n =N+1}^{\infty} \| f \tau^{n} \|_{\infty}  < \dfrac{\delta}{2}.$$
	Moreover, by Lemma \ref{lem1}, $ | f |^{-1}  ((0, \infty)) \cap | \tau |^{-1} (\left[ 1,\infty \right) )= \varnothing$. Put $\epsilon:=\dfrac{\delta}{2N} .$ Since $ f \in C_{0}(\mathbb{R})$, the set 
	$$K_\epsilon:=\{x\in\mathbb{R}:\, |f(x)|\geq \epsilon\}$$
	is compact. So, for all $x \in K_{\epsilon},$ we have
	$$ \epsilon  \sum_{n =0}^{\infty} |  \tau(x) |^{n} \leq
	\sum_{n =0}^{\infty} | f(x) \tau(x)^{n} |  \leq
	\sum_{n =0}^{\infty} \| f \tau^{n} \|_{\infty}= \| f \|_{\tau}. $$
	Hence  
	$$\sum_{n =0}^{\infty} | \tau(x) |^{n} < \dfrac{\| f \|_{\tau}}{\epsilon}  \quad\text{ for all } x \in K_{\epsilon} $$
	which gives that
	$$ | \tau (x) | < 1 -\dfrac{\epsilon}{\| f \|_{\tau}} \quad\text{ for all } x \in K_{\epsilon}  . $$
	Setting
	$$\epsilon_{2}=1-\dfrac{\epsilon}{\| f \|_{\tau}},$$ we have
	$K_{\epsilon} \subseteq | \tau |^{-1} (\left[ 0,\epsilon_{2} \right] )$.
	From now on we will denote $K_{\epsilon}$ by $K_{\epsilon_{2}}^{\tau}$. Choose an $\epsilon_{1} \in (\epsilon_{2} , 1)$ and consider the corresponding function $\mu_{K_{\epsilon_{2}}^{(\tau)}, \epsilon_{1}}\in \mathcal{A}_{\tau}$. Since 
	$\mu_{ {K_{ \epsilon_{2} }^{(\tau)} , \epsilon_{1}}}=1$ on $K_{\epsilon_{2}}^{(\tau)}$, we have 
		$( f \mu_{ {K_{ \epsilon_{2} }^{(\tau)} , \epsilon_{1}} }) (x) =f(x)  $ for all $x \in K_{ \epsilon_{2} }^{(\tau)}.$ 
	Also, for each $x \in \mathbb{R} \setminus K_{ \epsilon_{2} }^{(\tau)}$, 
	$$| f(x) (\mu_{ {K_{ \epsilon_{2} }^{(\tau)} , \epsilon_{1}} }^{(\tau)} -1)(x)| \text{ } |\tau(x)|^{n} \leq \dfrac{\delta}{2N}    $$
	since $| f(x)| \leq \dfrac{\delta}{2N}$,\,\,\,$\mu_{ {K_{ \epsilon_{2} }^{(\tau)} , \epsilon_{1}} }: \mathbb{R} \rightarrow \left[ 0,1 \right] $ by the construction from Urisohn's Lemma (which gives $\| \mu_{ {K_{ \epsilon_{2} }^{(\tau)} , \epsilon_{1}} }-1 \|_{\infty}=1 ) $,  and in addition 
	$| f |^{-1}((0,\infty)) \cap | \tau |^{-1}(\left[ 1,\infty \right))= \varnothing $ by Lemma \ref{lem1}. Then, we get 
	$$\| f(\mu_{ {K_{ \epsilon_{2} }^{(\tau)}, \epsilon_{1}} }-1 )\,\tau^{n} \|_{\infty}   \leq\min \{ \dfrac{\delta}{2N}, \text{ } \|f \tau^{n} \|_{\infty} \}   \text{ for all }n \in \mathbb{N} .$$
	Hence, we deduce 
	$$ \sum_{n =1}^{\infty}  
	\| f(\mu_{ {K_{ \epsilon_{2} }^{(\tau)} , \epsilon_{1}} }-1 )   \tau^{n} \|_{\infty}  \leq        
	\sum_{n =1}^{N}  \dfrac{\delta}{2N} + \sum_{n =N+1}^{\infty} \|f \tau^{n} \|_{\infty} < \dfrac{\delta}{2} +\dfrac{\delta}{2}= \delta,$$
	and the proof is complete.
\end{proof}
\begin{corollary} \label{cor1}
	The set 
	$$\{ f \in \mathcal{A}_{\tau} \cap C_{c}(\mathbb{R}):\,\,{\rm supp} f \subseteq | \tau |^{-1} ( \left[ 0,\epsilon \right] ) \text{ for some } \epsilon\in(0,1)\}$$
	 is dense in $\mathcal{A}_{\tau} .$ 
\end{corollary}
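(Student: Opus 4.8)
The plan is to read off the corollary directly from Lemma~\ref{lem2}, which already does all the substantive work. Fix an arbitrary $f\in\mathcal{A}_\tau$; it suffices to show that $f$ lies in the closure (in the $\|\cdot\|_\tau$-norm) of the displayed set. By Lemma~\ref{lem2} there is a sequence $(g_n)$ chosen from the family $\{\mu_{K_{\epsilon_{2}}^{(\tau)}, \epsilon_{1} }:\, 0<\epsilon_{2}<\epsilon_{1}<1,\ K_{\epsilon_{2}}^{(\tau)} \text{ compact}\}$ with $fg_n\to f$ in $\mathcal{A}_\tau$. Hence the only remaining task is the routine verification that each product $fg_n$ belongs to the set in the statement.

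For this membership check I would argue as follows. Since $\mathcal{A}_\tau$ is a Banach algebra and both $f$ and $g_n$ lie in $\mathcal{A}_\tau$, the product $fg_n\in\mathcal{A}_\tau$. Writing $g_n=\mu_{K_{\epsilon_{2}}^{(\tau)}, \epsilon_{1}}$ for the associated parameters, property~(2) of the lemma constructing these functions gives $g_n\in C_c(\mathbb{R})$; as $f$ is continuous and ${\rm supp}(fg_n)\subseteq{\rm supp}\,g_n$ is compact, we get $fg_n\in C_c(\mathbb{R})$, and thus $fg_n\in\mathcal{A}_\tau\cap C_c(\mathbb{R})$. Finally, property~(3) yields ${\rm supp}\,g_n\subseteq|\tau|^{-1}([0,\epsilon_{1}])$ with $\epsilon_{1}\in(0,1)$, so that ${\rm supp}(fg_n)\subseteq|\tau|^{-1}([0,\epsilon_{1}])$. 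Taking $\epsilon:=\epsilon_{1}$ shows that $fg_n$ is a member of the displayed set.

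Combining the two points, $f$ is a norm-limit of elements of the set, and since $f\in\mathcal{A}_\tau$ was arbitrary, the set is dense. There is no genuine obstacle here: the compactly-supported cut-off functions and the approximation $fg_n\to f$ are supplied by the preceding lemmas, and what remains is purely the bookkeeping of confirming that each $fg_n$ satisfies the three defining conditions, namely membership in $\mathcal{A}_\tau\cap C_c(\mathbb{R})$ together with the support containment ${\rm supp}(fg_n)\subseteq|\tau|^{-1}([0,\epsilon])$ for a suitable $\epsilon\in(0,1)$.
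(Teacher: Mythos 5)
Your proposal is correct and follows essentially the same route as the paper: invoke Lemma~\ref{lem2} to get $fg_n\to f$ in $\|\cdot\|_\tau$, then check that each $fg_n$ lies in the displayed set using properties (2) and (3) of the cut-off functions $\mu_{K_{\epsilon_{2}}^{(\tau)},\epsilon_{1}}$ together with ${\rm supp}(fg_n)\subseteq{\rm supp}\,g_n$. The paper's own proof is exactly this verification, so there is nothing to add.
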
	

\begin{proof}
	Let $ f \in \mathcal{A}_{\tau}$ and  $\delta>0$. By Lemma \ref{lem2} there exists an $\mu_{K_{\epsilon_{2}}^{(\tau)}, \epsilon_{1} }\in \mathcal{A}_{\tau} $ such that $ \| f	\mu_{K_{\epsilon_{2}}^{(\tau)}, \epsilon_{1} }^{(\tau)} - f \|_{\tau} < \delta.$ Now, we have that  
	$${\rm supp}f	\mu_{K_{\epsilon_{2}}^{(\tau)}, \epsilon_{1} }\subseteq {\rm supp} \text{ } \mu_{K_{\epsilon_{2}}^{(\tau)}, \epsilon_{1} }\subseteq | \tau |^{-1} ( \left[ 0,\epsilon_{1} \right]),$$
	and $\epsilon_{1} \in (0,1)$. 
	Moreover,  ${\rm supp} \mu_{K_{\epsilon_{2}}^{(\tau)}, \epsilon_{1} }$ is compact, hence ${\rm supp}f \mu_{K_{\epsilon_{2}}^{(\tau)}, \epsilon_{1} }$ is compact as well. 
\end{proof}	

\begin{lemma} \label{l26}
	For each $k \in \mathbb{Z}$, $ f \circ \alpha \in \mathcal{A}_{\tau \circ \alpha^{k+1}} $ if and only if $ f \in \mathcal{A}_{\tau \circ \alpha^{k}} $. Moreover, in this case, $\| f \circ \alpha \|_{\tau \circ \alpha^{k+1}}= \| f \|_{\tau \circ \alpha^{k}}$. 
\end{lemma}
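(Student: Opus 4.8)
The plan is to reduce both the equivalence and the norm equality to a single pointwise identity that matches the two defining series term by term. The crucial observation is that for every $n\in\mathbb{N}_0$ one has
$$(f\circ\alpha)\,(\tau\circ\alpha^{k+1})^{n}=\big(f\,(\tau\circ\alpha^{k})^{n}\big)\circ\alpha.$$
To verify this I would evaluate both sides at an arbitrary $x\in\mathbb{R}$: the left-hand side is $f(\alpha(x))\,[\tau(\alpha^{k+1}(x))]^{n}$, while the right-hand side, using the group property $\alpha^{k}(\alpha(x))=\alpha^{k+1}(x)$ valid for every integer $k$, is $f(\alpha(x))\,[\tau(\alpha^{k}(\alpha(x)))]^{n}=f(\alpha(x))\,[\tau(\alpha^{k+1}(x))]^{n}$. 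The only care required here is the bookkeeping of the iterates of $\alpha$ (in particular for negative $k$, where $\alpha^{k}$ is an iterate of $\alpha^{-1}$); there is no genuine obstacle.

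Next I would record that precomposition with the homeomorphism $\alpha$ is an isometry for the sup-norm: since $\alpha$ is a bijection of $\mathbb{R}$, the set of values of $g\circ\alpha$ coincides with the set of values of $g$, whence $\|g\circ\alpha\|_{\infty}=\|g\|_{\infty}$ for every $g\in C_b(\mathbb{R})$. Combining this with the displayed identity gives, for each $n$,
$$\big\|(f\circ\alpha)\,(\tau\circ\alpha^{k+1})^{n}\big\|_{\infty}=\big\|\big(f\,(\tau\circ\alpha^{k})^{n}\big)\circ\alpha\big\|_{\infty}=\big\|f\,(\tau\circ\alpha^{k})^{n}\big\|_{\infty}.$$

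Summing over $n\in\mathbb{N}_0$ then yields $\sum_{n}\|(f\circ\alpha)(\tau\circ\alpha^{k+1})^{n}\|_{\infty}=\sum_{n}\|f(\tau\circ\alpha^{k})^{n}\|_{\infty}$, so one series converges exactly when the other does, and the sums agree. It remains only to treat the underlying membership in $\mathcal{A}=C_0(\mathbb{R})$: since $\alpha$ is a homeomorphism of $\mathbb{R}$, it extends to a homeomorphism of the one-point compactification, so $f\in C_0(\mathbb{R})$ if and only if $f\circ\alpha\in C_0(\mathbb{R})$. Putting these facts together shows that $f\circ\alpha\in\mathcal{A}_{\tau\circ\alpha^{k+1}}$ if and only if $f\in\mathcal{A}_{\tau\circ\alpha^{k}}$, and in that case $\|f\circ\alpha\|_{\tau\circ\alpha^{k+1}}=\|f\|_{\tau\circ\alpha^{k}}$, as claimed. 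The proof is essentially routine; the one place to be attentive is the verification of the pointwise identity, since an error in the exponent on $\alpha$ there would propagate through the whole argument.
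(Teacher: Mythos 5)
Your proof is correct and follows essentially the same route as the paper's: both rest on the pointwise identity $(f\circ\alpha)(\tau\circ\alpha^{k+1})^{n}=\bigl(f(\tau\circ\alpha^{k})^{n}\bigr)\circ\alpha$ together with the fact that precomposition with the homeomorphism $\alpha$ preserves the sup-norm, and then sum over $n$. Your additional remark that $f\in C_0(\mathbb{R})$ if and only if $f\circ\alpha\in C_0(\mathbb{R})$ is a small but welcome completion that the paper leaves implicit.
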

\begin{proof}
	Note that for each $f\in \mathcal{A}_{\tau\circ \alpha^k}$, 
	\begin{align*}
	\sum_{n=0}^\infty\|(f\circ\alpha)(\tau\circ \alpha^{k+1})^n\|_{\infty}&= \sum_{n=0}^\infty\|[(f\circ\alpha)(\tau\circ \alpha^{k+1})^n]\circ\alpha^{-1}\|_{\infty}\\
	&= \sum_{n=0}^\infty\|f(\tau\circ \alpha^{k})^n\|_{\infty} =\| f \|_{\tau \circ \alpha^{k}}.
	\end{align*}	
\end{proof}

Define
$${\rm c}_0^{\mathcal{A},\tau}:=\Big\{s:=(s_k)\in\prod_{k\in\mathbb{Z}}\mathcal{A}_{\tau\circ \alpha^k}:\,\lim_{k\rightarrow\infty}\|s_k\|_{\tau\circ\alpha^k}=\lim_{k\rightarrow\infty}\|s_{-k}\|_{\tau\circ\alpha^{-k}}=0\Big\}.$$
We equip ${\rm c_0}_{\mathcal{A},\tau}$ with the norm 
$$\|s\|_0:=\sup_{k\in\mathbb{Z}}\|s_k\|_{\tau\circ\alpha^k}.$$

\begin{lemma}
	The operator ${T}_{\alpha,W}$ is a self-mapping on ${\rm c}_0^{\mathcal{A},\tau}$.
\end{lemma}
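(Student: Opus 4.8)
The plan is to take an arbitrary $s=(s_k)_{k\in\mathbb{Z}}\in {\rm c}_0^{\mathcal{A},\tau}$, set $t:=T_{\alpha,W}(s)$, and verify directly that $t\in{\rm c}_0^{\mathcal{A},\tau}$. Unwinding the definition $T_{\alpha,W}=T_{\Phi_\alpha,W,1}$ gives the explicit coordinate formula $t_j=w_j\,\Phi_\alpha(s_{j-1})=w_j\,(s_{j-1}\circ\alpha)$ for each $j\in\mathbb{Z}$, so the entire verification reduces to controlling a single coordinate in terms of $s_{j-1}$.

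First I would establish the membership $t_j\in\mathcal{A}_{\tau\circ\alpha^j}$ in two moves. Since $s_{j-1}\in\mathcal{A}_{\tau\circ\alpha^{j-1}}$, Lemma \ref{l26} (applied with $k=j-1$ and $f=s_{j-1}$) gives $s_{j-1}\circ\alpha\in\mathcal{A}_{\tau\circ\alpha^{j}}$ together with the isometry $\|s_{j-1}\circ\alpha\|_{\tau\circ\alpha^{j}}=\|s_{j-1}\|_{\tau\circ\alpha^{j-1}}$. Then I would multiply by the bounded factor $w_j\in C_b(\mathbb{R})$ using the elementary multiplier estimate
$$\|w_j\,g\|_{\tau\circ\alpha^{j}}=\sum_{n=0}^\infty\|w_j\,g\,(\tau\circ\alpha^{j})^n\|_\infty\leq\|w_j\|_\infty\sum_{n=0}^\infty\|g\,(\tau\circ\alpha^{j})^n\|_\infty=\|w_j\|_\infty\,\|g\|_{\tau\circ\alpha^{j}},$$
valid for every $g\in\mathcal{A}_{\tau\circ\alpha^{j}}$; this keeps the product in $\mathcal{A}$ because $w_j\in C_b(\mathbb{R})$ and $g\in C_0(\mathbb{R})$ force $w_j\,g\in C_0(\mathbb{R})$. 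Combining the two moves yields the key coordinatewise inequality $\|t_j\|_{\tau\circ\alpha^{j}}\leq\|w_j\|_\infty\,\|s_{j-1}\|_{\tau\circ\alpha^{j-1}}$.

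Next I would set $M:=\sup_{j}\|w_j\|_\infty<\infty$, which is the standing hypothesis on $W$, so that $\|t_j\|_{\tau\circ\alpha^{j}}\leq M\,\|s_{j-1}\|_{\tau\circ\alpha^{j-1}}$ for all $j$. Feeding this into the two decay requirements defining ${\rm c}_0^{\mathcal{A},\tau}$: as $j\to+\infty$ the right-hand side is $M\,\|s_{j-1}\|_{\tau\circ\alpha^{j-1}}\to 0$ by the first limit in the definition of $s$, while for the negative tail one has $\|t_{-j}\|_{\tau\circ\alpha^{-j}}\leq M\,\|s_{-j-1}\|_{\tau\circ\alpha^{-j-1}}\to 0$ by the second limit (with the shift $k=j+1$). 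Hence both one-sided limits vanish and $t\in{\rm c}_0^{\mathcal{A},\tau}$, which is exactly the assertion that $T_{\alpha,W}$ is a self-mapping.

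I do not anticipate a genuine obstacle: the argument is essentially bookkeeping of the index shift $j\mapsto j-1$ induced by the shift structure, together with the two stability facts, namely the isometry of composition from Lemma \ref{l26} and the boundedness of multiplication by $C_b(\mathbb{R})$ functions. The only point requiring a little care is confirming that composition with the homeomorphism $\alpha$ maps $C_0(\mathbb{R})$ into itself, which holds because $\alpha^{-1}$ sends compact sets to compact sets; this is already implicit in Lemma \ref{l26}.
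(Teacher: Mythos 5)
Your argument is correct and follows essentially the same route as the paper: both reduce to the coordinatewise bound $\|(T_{\alpha,W}(s))_{j}\|_{\tau\circ\alpha^{j}}\leq\|w_{j}\|_\infty\,\|s_{j-1}\|_{\tau\circ\alpha^{j-1}}$ via the isometry of composition with $\alpha$ (which the paper re-derives inline rather than citing Lemma \ref{l26}) and the boundedness of multiplication by $w_j$. Your explicit verification of the two decay conditions at $\pm\infty$ only spells out what the paper leaves implicit.
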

\begin{proof}
	Let $s \in {\rm c}_0^{\mathcal{A},\tau}$. Just observe that for each $j\in\mathbb{Z}$,
	\begin{align*}
	\| ({T}_{\alpha,W}(s))_{j+1}\|_{\tau\circ\alpha^{j+1}}&\leq \|w_{j+1} \|_\infty\,\sum_{k=0}^\infty\, \|(s_j\circ\alpha)\cdot(\tau\circ\alpha^{j+1})^k\|_{\infty}\\
	&=\|w_{j+1} \|_\infty\,\sum_{k=0}^\infty\, \|(s_j\cdot(\tau\circ\alpha^{j})^k)\circ\alpha\|_{\infty}\\
	&=\|w_{j+1} \|_\infty\,\sum_{k=0}^\infty\, \|(s_j\cdot(\tau\circ\alpha^{j})^k\|_{\infty}\\
	&=\|w_{j+1} \|_\infty\,\|s_j\|_{\tau\circ\alpha^{j}} \leq \sup_{k}\|w_k\|_\infty\,\,\, \|s_j\|_{\tau\circ\alpha^{j}}.
	\end{align*}
\end{proof}
\begin{theorem}
	The followings are equivalent:
	\begin{enumerate}
	\item $T_{\alpha,W} $ is hypercyclic on ${\rm c}_0^{\mathcal{A},\tau}$.
	\item For every $J \in \mathbb{N} $ and finite collection  
	$$\{ K_{\epsilon_{J}}^{(\tau \circ \alpha^{-J})}, K_{\epsilon_{-J+1}}^{(\tau \circ \alpha^{-J+1})}, \ldots ,K_{\epsilon_{0}}^{\tau } , \ldots , K_{\epsilon_{J}}^{(\tau \circ \alpha^{J})} \} $$ 
	of compact subsets of $\mathbb{R}$, there exists an increasing sequence $(r_{k})\subseteq \mathbb{N} $
	such that 
	$$\lim_{k\rightarrow\infty} (\sup_{t \in K_{\epsilon_{j}}^{(\tau \circ \alpha^{j})}}  |  \prod_{i=1}^{r_{k}} ( w_{i+j} \circ \alpha^{-i}) (t) |)=
	\lim_{k\rightarrow\infty} (\sup_{t \in K_{\epsilon_{j}}^{(\tau \circ \alpha^{j})}}  |  \prod_{i=1}^{r_{k}} ( w_{j+1-i} \circ \alpha^{-i})^{-1} (t) | ) =0  $$ for all $j \in I_J$.
	\end{enumerate}
\end{theorem}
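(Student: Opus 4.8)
The plan is to realize $T_{\alpha,W}$ as a weighted shift on $\mathrm{c}_0^{\mathcal{A},\tau}$ and to run the transitivity/Hypercyclicity-Criterion scheme of Theorem \ref{thm1}, now with the twisted norms $\|\cdot\|_{\tau\circ\alpha^j}$ handled through Lemma \ref{l26} and Corollary \ref{cor1}. Since the inverses in (2) are meaningful only then, I assume throughout that each $w_j$ is nonvanishing with $\sup_j\|w_j^{-1}\|_\infty<\infty$; this makes $(S_{\alpha,W}s)_j:=(w_{j+1}^{-1}s_{j+1})\circ\alpha^{-1}$ a bounded self-map of $\mathrm{c}_0^{\mathcal{A},\tau}$ and a right inverse of $T_{\alpha,W}$, so $T_{\alpha,W}^nS_{\alpha,W}^n=\mathrm{id}$ for all $n$. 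A short induction gives the iterates $(T_{\alpha,W}^ns)_{m+n}=\big[\big(\prod_{i=1}^n(w_{m+i}\circ\alpha^{-i})\big)\,s_m\big]\circ\alpha^n$ and $(S_{\alpha,W}^ns)_{m-n}=\big[\big(\prod_{i=1}^n(w_{m+1-i}\circ\alpha^{i-1})^{-1}\big)\,s_m\big]\circ\alpha^{-n}$, in which the two products of (2) already appear.

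The computational core is the passage from these formulas to the two limits. Stripping the outer $\alpha^{\pm n}$ with Lemma \ref{l26} turns the $\|\cdot\|_{\tau\circ\alpha^{m\pm n}}$-norm of each component into the $\|\cdot\|_{\tau\circ\alpha^m}$-norm of the bracketed function. I then take $s$ in the dense set of Corollary \ref{cor1}: finitely supported in the index, each $s_m$ lying in $\mathcal{A}_{\tau\circ\alpha^m}\cap C_c(\mathbb{R})$ with support inside a sublevel set $|\tau\circ\alpha^m|^{-1}([0,\epsilon_m])$, $\epsilon_m<1$. On that support $|\tau\circ\alpha^m|^l\le\epsilon_m^l$, so the defining series $\|f\|_{\tau\circ\alpha^m}=\sum_l\|f\,(\tau\circ\alpha^m)^l\|_\infty$ is dominated by a convergent geometric series, yielding $\|(T_{\alpha,W}^ns)_{m+n}\|_{\tau\circ\alpha^{m+n}}\le\frac{\|s_m\|_\infty}{1-\epsilon_m}\sup_{K_m}\big|\prod_{i=1}^n(w_{m+i}\circ\alpha^{-i})\big|$ and the companion bound for $S_{\alpha,W}^n$ with the second product. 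This is exactly the link between the supremum conditions in (2) and the Segal norms.

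For (2)$\Rightarrow$(1) I would verify topological transitivity, which is equivalent to hypercyclicity by the remark after the definitions. Given nonempty open $U,V$, choose $y\in U$ and $z\in V$ in the dense set of Corollary \ref{cor1}, let $J$ bound their index-supports, and let $K_j$ be the (compact) union of the supports of $y_j$ and $z_j$, which lies in a common sublevel set. Feeding $(J,\{K_j\})$ into (2) produces a single increasing sequence $(r_k)$ killing both products uniformly on every $K_j$; by the estimates above and finiteness of the supports, $\|T_{\alpha,W}^{r_k}y\|_0\to0$ and $\|S_{\alpha,W}^{r_k}z\|_0\to0$. Setting $x_k:=y+S_{\alpha,W}^{r_k}z$, we get $x_k\to y$ and $T_{\alpha,W}^{r_k}x_k=T_{\alpha,W}^{r_k}y+z\to z$, so eventually $x_k\in U$ and $T_{\alpha,W}^{r_k}x_k\in V$, i.e. $T_{\alpha,W}^{r_k}(U)\cap V\neq\varnothing$.

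For (1)$\Rightarrow$(2) I would reverse the estimates as in Theorem \ref{thm1}. Given $J$ and compacts $K_j\subseteq|\tau\circ\alpha^j|^{-1}([0,\epsilon_j])$, the Urysohn-type lemma above furnishes bumps $\mu_j\in\mathcal{A}_{\tau\circ\alpha^j}\cap C_c(\mathbb{R})$ equal to $1$ on $K_j$ and supported in a slightly larger sublevel set; I assemble them into a finitely supported $z\in\mathrm{c}_0^{\mathcal{A},\tau}$. By dense hypercyclicity there are hypercyclic vectors $s^{(k)}$ with $\|s^{(k)}-z\|_0$ and $\|T_{\alpha,W}^{r_k}s^{(k)}-z\|_0$ arbitrarily small along some $r_k\uparrow\infty$. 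Evaluating components at points $t\in K_j$ (the analogues of the characters $h$) and using $\|\cdot\|_\infty\le\|\cdot\|_{\tau\circ\alpha^j}$: the component at index $j+r_k$ (where $z$ vanishes), together with the lower bound $|s^{(k)}_j|>\frac12$ on $K_j$, forces $\sup_{K_j}\prod_{i=1}^{r_k}|(w_{i+j}\circ\alpha^{-i})|\to0$; the component at index $j$ (where $z=\mu_j=1$ on $K_j$), together with the smallness of $s^{(k)}_{j-r_k}$, forces after the reindexing $s=r_k-i+1$ that $\sup_{K_j}\prod_{i=1}^{r_k}|(w_{j+1-i}\circ\alpha^{i-1})|^{-1}\to0$. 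The main obstacle throughout is precisely this transfer step: converting uniform supremum control of the weight products into control of the twisted Segal norms $\|\cdot\|_{\tau\circ\alpha^m}$. It succeeds only because the dense vectors are supported where $|\tau\circ\alpha^m|\le\epsilon_m<1$, making the norm-defining series geometrically summable; verifying that the backward iterates $S_{\alpha,W}^{r_k}$ remain in $\mathrm{c}_0^{\mathcal{A},\tau}$ with the required uniform bounds is the secondary delicate point.
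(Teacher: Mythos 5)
Your proposal is correct and follows essentially the same route as the paper: the dense set of Corollary \ref{cor1}, Lemma \ref{l26} to strip the compositions with $\alpha^{\pm n}$ from the twisted Segal norms, Urysohn bumps assembled into a finitely supported test vector for (1)$\Rightarrow$(2), and pointwise convergence of $T_{\alpha,W}^{r_k}$ and $S_{\alpha,W}^{r_k}$ to zero on a dense subset for (2)$\Rightarrow$(1). Your explicit standing assumption that each $w_j$ is nonvanishing with $\sup_j\|w_j^{-1}\|_\infty<\infty$ is precisely what the paper leaves implicit, and your second product $(w_{j+1-i}\circ\alpha^{i-1})^{-1}$ matches what the paper's own computation actually produces (the $\alpha^{-i}$ in the displayed condition (2) appears to be a typo).
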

\begin{proof}
	The method is almost analogue of the proof of Theorem \ref{thm1}. 
	
	 $(1)\Rightarrow(2)$: Let $J \in \mathbb{N} $ and   
	 $$\{ K_{\epsilon_{J}}^{(\tau \circ \alpha^{-J})}, K_{\epsilon_{-J+1}}^{(\tau \circ \alpha^{-J+1})}, \ldots ,K_{\epsilon_{0}}^{\tau } , \ldots , K_{\epsilon_{J}}^{(\tau \circ \alpha^{J})} \} $$ 
	 be a collection of compact subsets of $\mathbb{R}$. For each $j \in I_J$, fix numbers $ 0< \epsilon_{j} < \eta_{j} <1 $ and put 
		$$\mu:=(\ldots,0, \ldots, 0, 
	\mu_{K_{\epsilon_{-J}}^{(\tau \circ \alpha^{-J})}, \eta_{-J} },
	\mu_{K_{\epsilon_{-J+1}}^{(\tau \circ \alpha^{-J+1})}, \eta_{-J+1}}, \ldots ,$$
	$$ 
	\qquad\qquad\qquad\qquad\mu_{K_{\epsilon_{0}}^{(\tau )}, \eta_{0} }, \ldots,
	\mu_{K_{\epsilon_{J-1}}^{(\tau \circ \alpha^{J-1})}, \eta_{J-1} },
	\mu_{K_{\epsilon_{J}}^{(\tau \circ \alpha^{J})}, \eta_{J} }, 0, \ldots,0, \ldots).$$ 
	Then $\mu \in {\rm c}_0^{\mathcal{A},\tau}$. By letting $\mu $ play the role of $z$ in the proof of the part $(1) \Rightarrow (2) $ in the proof of Theorem \ref{thm1} we can proceed in the same way as in that proof. It suffices just to observe that for all $v \in {\rm c}_0^{\mathcal{A},\tau}$ and $j \in \mathbb{Z} $ we have that $\| v \|_{0} \geq \| v_{j} \|_{\tau \circ \alpha^{J}} \geq \| v_{j} \|_{\infty}  $ (where $\| \cdot \|_{0} $ is the norm on ${\rm c}_0^{\mathcal{A},\tau}$, recall the definition of ${\rm c}_0^{\mathcal{A},\tau}$ and $\| \cdot \|_{0} $ ). Next, we prove $(2) \Rightarrow(1)$:\\
	Put 
	$$\mathcal{F}_{{\rm c}_0^{\mathcal{A},\tau}}:=\Big\{ y \in {\rm c}_0^{\mathcal{A},\tau}:\, \exists J \in \mathbb{N} \text{ such that } y_{j}=0 \text{ } \forall j\in\mathbb{Z}\setminus I_J,\,{\rm supp}y_{j} \text{ is compact for all }$$	
	 $$j \in I_J,\,  {\rm supp}y_{j} \subset |\tau \circ \alpha^{j}  |^{-1} ( \left[ 0,\epsilon_{j} \right])\, \,\forall j \in I_J \text{ and some } \epsilon_{-J}, \ldots , \epsilon_{J} \in (0,1)\Big \}.$$
	By Corollary \ref{cor1}, $\mathcal{F}_{{\rm c}_0^{\mathcal{A},\tau}} $ is dense in $ {\rm c}_0^{\mathcal{A},\tau}$. Let $y \in \mathcal{F}_{{\rm c}_0^{\mathcal{A},\tau}}$. Then, there exists some $J \in \mathbb{N} $ such that $y_{j}=0 $ whenever $j \notin I_J$. Set $K_{\epsilon_{j}}^{(\tau \circ \alpha^{j})}:={\rm supp}\,y_{j}$  for $j \in I_J$ where $ \epsilon_{-J}, \ldots , \epsilon_{J} \in (0,1)$ are such that 
	$$K_{\epsilon_{j}}^{(\tau \circ \alpha^{j})} \subseteq |\tau \circ \alpha^{j}| \text{ }  ( \left[ 0,\epsilon_{j} \right] ) \quad \forall j \in I_J.$$ 
	Then $K_{\epsilon_{j}}^{(\tau \circ \alpha^{j})} $ is compact for all $j \in I_J$ as $y \in \mathcal{F}_{{\rm c}_0^{\mathcal{A},\tau}}$. Hence, there exists an increasing sequence $(r_{k})\subseteq \mathbb{N} $ such that  for all $j \in I_J$ we have
	$$\lim_{k\rightarrow\infty} (\sup_{t \in K_{\epsilon_{j}}^{(\tau \circ \alpha^{j})}}  |  \prod_{i=1}^{r_{k}} ( w_{i+j} \circ \alpha^{-i}) (t) |=
	\lim_{k\rightarrow\infty} (\sup_{t \in K_{\epsilon_{j}}^{(\tau \circ \alpha^{j})}}  |  \prod_{i=1}^{r_{k}} ( w_{j+1-i} \circ \alpha^{-i})^{-1} (t) | ) =0 . $$ 
	By Lemma \ref{l26}, for all $
	j \in I_J$ and $k \in \mathbb{N} $  we get 
	\begin{align*}	
	\| (T_{\alpha,W}^{r_{k}}(y))_{j+r_{k}} \|_{(\tau \circ \alpha^{(j+r_{k})})}&=
	\| \prod_{i=1}^{r_{k}} ( w_{j+i} \circ \alpha^{r_{k}-i}) ( y_{j}\circ \alpha^{r_{k}}) \|_{(\tau \circ \alpha^{(j+r_{k})})}\\
	&= \| \prod_{i=1}^{r_{k}} ( w_{j+i} \circ \alpha^{-i})  y_{j} \|_{(\tau \circ \alpha^{j})}.
	\end{align*}
	
	Now, we have 
	\begin{align*}
	\| \prod_{i=1}^{r_{k}} ( w_{j+i} \circ \alpha^{-i})  y_{j} \|_{(\tau \circ \alpha^{j})}
	&=\sum_{n =1}^{\infty}  \| \prod_{i=1}^{r_{k}} ( w_{j+i} \circ \alpha^{-i})  y_{j} (\tau \circ \alpha^{j})^{n}\|_{\infty}\\	
	&\leq \sum_{n =1}^{\infty} (\sup_{t \in K_{\epsilon_{j}}^{(\tau \circ \alpha^{j})}}  |  \prod_{i=1}^{r_{k}} ( w_{j+i} \circ \alpha^{-i}) (t) |) \|   y_{j} (\tau \circ \alpha^{j})^{n}     \|_{\infty}\\
	&= (\sup_{t \in K_{\epsilon_{j}}^{(\tau \circ \alpha^{j})}}  |  \prod_{i=1}^{r_{k}} ( w_{j+i} \circ \alpha^{-i}) (t) | \text{ } )   \text{ }    \sum_{n =1}^{\infty} \|  y_{j} (\tau \circ \alpha^{j})^{n}     \|_{\infty}\\
	&= (\sup_{t \in K_{\epsilon_{j}}^{(\tau \circ \alpha^{j})}}  |  \prod_{i=1}^{r_{k}} ( w_{j+i} \circ \alpha^{-i}) (t) | \text{ } )   \text{ }     \|  y_{j}     \|_{ (\tau \circ \alpha^{j}) }.
	\end{align*}
	Indeed, this holds because ${\rm supp} y_{j}=K_{\epsilon_{j}}^{(\tau \circ \alpha^{j})}$. Since 
	$$\lim_{k \rightarrow \infty} (  \sup_{t \in K_{\epsilon_{j}}^{(\tau \circ \alpha^{j})}}  |  \prod_{i=1}^{r_{k}} ( w_{j+i} \circ \alpha^{-i}) (t) | \text{ } ) =0 ,$$ 
	we get that 
	$$\lim_{k \rightarrow \infty} \|(   T_{\alpha ,W}^{r_k} (y)  )_{j+r_{k}} \|_{( \tau \circ \alpha^{j+r_{k}} )}=0 .$$ 
	Next, for all $ j \in I_J$ and $k \in \mathbb{N} $ we have
	\begin{align*}
	\|(   S_{\alpha ,W}^{r_k} (y)  )_{( \tau \circ \alpha^{j-r_{k}} )}
	&= \| \prod_{i=1}^{r_{k}} ( w_{j+1-i} \circ \alpha^{i-r_{k}-1})^{-1} ( y_{j} \circ \alpha^{-r_{k}}  )  \|_{(\tau \circ \alpha^{j-r_{k}})}\\
&= \| \prod_{i=1}^{r_{k}} ( w_{j+1-i} \circ \alpha^{i-1})^{-1} ( y_{j} )  \|_{(\tau \circ \alpha^{j})}\\
	&\leq  \sup_{t \in K_{\epsilon_{j}}^{(\tau \circ \alpha^{j})}} ( |  \prod_{i=1}^{r_{k}} ( w_{j+1-i} \circ \alpha^{i-1})^{-1} (t) | \text{ } )   \text{ }     \|  y_{j}     \|_{ (\tau \circ \alpha^{j}) }. 
	\end{align*}
	Hence 
	$$\lim_{k \rightarrow \infty} \|(   S_{\alpha ,W}^{r_k} (y)  )_{j-r_{k}} \|_{( \tau \circ \alpha^{j-r_{k}} )}=0 .$$ 
	Moreover, $(   T_{\alpha ,W}^{r_k} (y)  )_{j+r_{k}} =  (S_{\alpha ,W}^{r_k} (y)  )_{j-r_{k}}=0$ whenever $j \notin I_J$. Since the set $I_J$ is finite, it follows that 
	$$ \lim_{k \rightarrow \infty} \| T_{\alpha ,W}^{r_k} (y) \|_{0}= \lim_{k \rightarrow \infty} \| S_{\alpha ,W}^{r_k} (y) \|_{0} =0.$$ 
	Thus $ T_{\alpha ,W}^{r_k} $ and $ S_{\alpha ,W}^{r_k} $ converge pointwise on a dense subset of ${\rm c}_0^{\mathcal{A},\tau}$, hence they are hypercyclic on ${\rm c}_0^{\mathcal{A},\tau}$.
\end{proof}
\begin{corollary}
	If $T_{\alpha,W}$ is hypercyclic on $l_{2}(\mathcal{A})$, then 	$T_{\alpha,W}$ is  hypercyclic on ${\rm c}_0^{\mathcal{A},\tau}$. 
\end{corollary}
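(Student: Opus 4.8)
The plan is to transfer hypercyclicity between the two settings by matching the two criteria already established: Corollary \ref{cor0} characterizes hypercyclicity of $T_{\alpha,W}$ on $\ell^2(\mathcal{A})$, while the theorem immediately preceding this corollary characterizes hypercyclicity on ${\rm c}_0^{\mathcal{A},\tau}$. Both criteria have exactly the same analytic shape — the vanishing, along some strictly increasing sequence $(r_k)$, of $\sup_{t}\prod_{i=1}^{r_k}|(w_{i+j}\circ\alpha^{-i})(t)|$ and of $\sup_{t}\prod_{i=1}^{r_k}|(w_{j+1-i}\circ\alpha^{i-1})^{-1}(t)|$ (the second product in the preceding theorem's statement carries an evident index slip, $\alpha^{-i}$ in place of $\alpha^{i-1}$, but its proof uses $\alpha^{i-1}$, matching Corollary \ref{cor0}). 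The only genuine difference is the set over which the supremum is taken: Corollary \ref{cor0} quantifies over every compact $K\subseteq\mathbb{R}$, whereas the ${\rm c}_0^{\mathcal{A},\tau}$ criterion quantifies only over the particular compacts $K_{\epsilon_j}^{(\tau\circ\alpha^j)}$. Since the latter are in particular compact subsets of $\mathbb{R}$, the $\ell^2$-criterion is the stronger one, and so hypercyclicity should descend from $\ell^2(\mathcal{A})$ to ${\rm c}_0^{\mathcal{A},\tau}$.

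Concretely, I would first assume $T_{\alpha,W}$ is hypercyclic on $\ell^2(\mathcal{A})$ and invoke the implication (1)$\Rightarrow$(2) of Corollary \ref{cor0} (which requires no positivity of the weights), obtaining condition (2) there; and since the proof of Theorem \ref{thm1} is insensitive to the sign of the index $j$, this product condition in fact holds for every finite $I\subseteq\mathbb{Z}$, in particular for $I=I_J$. Then, to verify condition (2) of the preceding theorem, I fix $J\in\mathbb{N}$ and a finite collection $\{K_{\epsilon_j}^{(\tau\circ\alpha^j)}\}_{j\in I_J}$ of compact sets, and pass to the single compact set $K:=\bigcup_{j\in I_J}K_{\epsilon_j}^{(\tau\circ\alpha^j)}$. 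Applying the $\ell^2$-criterion to $K$ and $I=I_J$ produces one sequence $(r_k)$ that serves all $j\in I_J$ at once; and since $K_{\epsilon_j}^{(\tau\circ\alpha^j)}\subseteq K$, the suprema over each $K_{\epsilon_j}^{(\tau\circ\alpha^j)}$ are dominated by those over $K$, so both product limits vanish over each $K_{\epsilon_j}^{(\tau\circ\alpha^j)}$ along this same $(r_k)$. This is precisely condition (2) of the preceding theorem, and its implication (2)$\Rightarrow$(1) then yields hypercyclicity of $T_{\alpha,W}$ on ${\rm c}_0^{\mathcal{A},\tau}$.

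I expect the main difficulty to be bookkeeping rather than conceptual. Two points need care: the index sets differ, since Corollary \ref{cor0} is phrased for $I\subseteq\mathbb{N}$ whereas the ${\rm c}_0^{\mathcal{A},\tau}$ criterion ranges over $I_J=[-J,J]\cap\mathbb{Z}$, so one must justify that hypercyclicity on $\ell^2(\mathcal{A})$ delivers the product condition for all finite $I\subseteq\mathbb{Z}$ (immediate from re-reading the proof of Theorem \ref{thm1}); and one must produce a \emph{single} subsequence $(r_k)$ valid for the entire collection of compacts simultaneously, which is handled cleanly by forming the union $K$ before applying the $\ell^2$-criterion. Once these are in place, the domination $K_{\epsilon_j}^{(\tau\circ\alpha^j)}\subseteq K$ closes the argument under the standing positivity and boundedness hypotheses on $W$ that make the ${\rm c}_0^{\mathcal{A},\tau}$ criterion a genuine equivalence.
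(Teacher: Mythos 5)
Your argument is correct and is precisely the intended derivation: the paper states this corollary without proof, evidently as the immediate consequence of chaining the implication (1)$\Rightarrow$(2) of Corollary \ref{cor0} (applied to the union of the given compacts and to $I=I_J$) with the implication (2)$\Rightarrow$(1) of the preceding theorem, exactly as you do. Your attention to the two bookkeeping points — the index set $\mathbb{N}$ versus $I_J\subseteq\mathbb{Z}$, and the $\alpha^{-i}$ versus $\alpha^{i-1}$ slip in the preceding theorem's statement — is warranted and correctly resolved.
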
	
\begin{lemma}
	Let $\tau \in C_{b}(\mathbb{R})$ and assume that $\tau \circ \alpha = \tau.$ Then $T_{\alpha , w}$ is a bounded linear self-mapping on $\mathcal{A}_{\tau}.$  
\end{lemma}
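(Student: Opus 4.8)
The plan is to first make explicit the action of $T_{\alpha,w}$ on the single Segal algebra $\mathcal{A}_\tau$: it is the weighted composition operator $T_{\alpha,w}(f)=w\cdot(f\circ\alpha)$ for $f\in\mathcal{A}_\tau$, where $w\in C_b(\mathbb{R})$ so that $\|w\|_\infty<\infty$. Linearity is immediate, so the entire content reduces to a single norm estimate that will simultaneously give (i) that $T_{\alpha,w}(f)$ lies in $\mathcal{A}_\tau$ and (ii) that $T_{\alpha,w}$ is bounded. Before that I would record that, since $\alpha$ is a homeomorphism of $\mathbb{R}$ (hence proper), $f\circ\alpha\in C_0(\mathbb{R})$ whenever $f\in C_0(\mathbb{R})$; as $C_0(\mathbb{R})=\mathcal{A}$ is an ideal of $C_b(\mathbb{R})$ and $w\in C_b(\mathbb{R})$, the product $w\cdot(f\circ\alpha)$ again belongs to $\mathcal{A}$.

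The key step is to exploit the hypothesis $\tau\circ\alpha=\tau$, which yields $\tau^k\circ\alpha=(\tau\circ\alpha)^k=\tau^k$ for every $k\in\mathbb{N}_0$. This lets me commute the weight $\tau^k$ through the composition:
$$T_{\alpha,w}(f)\,\tau^k=w\cdot(f\circ\alpha)\cdot\tau^k=w\cdot(f\circ\alpha)\cdot(\tau^k\circ\alpha)=w\cdot\big((f\tau^k)\circ\alpha\big).$$
Using that composition with the homeomorphism $\alpha$ preserves the supremum norm, $\|g\circ\alpha\|_\infty=\|g\|_\infty$, together with $\|w\|_\infty<\infty$, I then obtain
$$\|T_{\alpha,w}(f)\,\tau^k\|_\infty\leq\|w\|_\infty\,\|(f\tau^k)\circ\alpha\|_\infty=\|w\|_\infty\,\|f\tau^k\|_\infty$$
for every $k\in\mathbb{N}_0$.

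Summing over $k$ completes the argument:
$$\|T_{\alpha,w}(f)\|_\tau=\sum_{k=0}^\infty\|T_{\alpha,w}(f)\,\tau^k\|_\infty\leq\|w\|_\infty\sum_{k=0}^\infty\|f\tau^k\|_\infty=\|w\|_\infty\,\|f\|_\tau<\infty.$$
The finiteness of the right-hand side shows $T_{\alpha,w}(f)\in\mathcal{A}_\tau$, so $T_{\alpha,w}$ is a self-mapping, while the inequality $\|T_{\alpha,w}(f)\|_\tau\leq\|w\|_\infty\|f\|_\tau$ gives boundedness with operator norm at most $\|w\|_\infty$.

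I do not anticipate a genuine obstacle: the proof is one telescoping estimate. The only point requiring care is the use of $\tau\circ\alpha=\tau$ to slide $\tau^k$ inside the composition $f\circ\alpha$; this invariance is precisely what aligns the weight $\tau^k$ with the composed argument, and without it the sum defining $\|T_{\alpha,w}(f)\|_\tau$ could not be controlled by $\|f\|_\tau$. The $C_0$-membership of $f\circ\alpha$ and the invariance of the supremum norm under $\alpha$ are routine consequences of $\alpha$ being a homeomorphism.
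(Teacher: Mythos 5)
Your proof is correct and follows essentially the same route as the paper: both arguments reduce to the single estimate $\|w\,(f\circ\alpha)\,\tau^k\|_\infty\leq\|w\|_\infty\,\|f\tau^k\|_\infty$, obtained by using the invariance $\tau\circ\alpha=\tau$ to slide $\tau^k$ through the composition and the fact that composition with a homeomorphism preserves the sup norm, then summing over $k$. (The paper conjugates the whole product by $\alpha^{-1}$ rather than pushing $\tau^k$ inside, but this is the same computation; your version even fixes a harmless typo in the paper's final line, where $\|f\|_\infty$ should read $\|f\|_\tau$.)
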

\begin{proof}
	For every $f \in \mathcal{A}_{\tau} ,$ we have that
	\begin{align*}  
	\sum_{k=0}^{\infty} \| w (f \circ \alpha) \tau^{k} \|_{\infty}&= \sum_{k=0}^{\infty} \| ((w \circ \alpha^{-1})f(\tau \circ \alpha^{-1})^{k}) \circ \alpha \|_{\infty}\\
	&=\sum_{k=0}^{\infty} \| ((w \circ \alpha^{-1})f\tau^{k} \|_{\infty}\\
	&\leq  \sup_{t \in \mathbb{R}} \vert  w(t) \vert \sum_{k=0}^{\infty} \| f \tau^{k} \|_{\infty}=\sup_{t \in \mathbb{R}} \vert w(t) \vert \| f \|_{\infty}.
	\end{align*}
\end{proof}
\begin{theorem}
	The followings are equivalent:
	\begin{enumerate}
	\item  $T_{\alpha , w}$ is hypercyclic on $\mathcal{A_{\tau}}.$
	\item  For every $\epsilon \in (0,1) $ and compact subset $K_{\epsilon}^{(\tau)} \subseteq \vert \tau \vert^{-1} ([0,\epsilon]),$ there exists a strictly increasing sequence $(n_{k}) $ in $\mathbb{N}$ such that 
	$$\lim_{k \rightarrow \infty } ( \sup_{t \in K_{\epsilon}^{(\tau)}} \vert   \prod_{j=0}^{n_{k}-1} (w \circ \alpha^{j-n_{k}} ) (t)  \vert )=\lim_{k \rightarrow \infty } ( \sup_{t \in K_{\epsilon}^{(\tau)}} \vert   \prod_{j=0}^{n_{k}-1} (w \circ \alpha^{j} )^{-1} (t)  \vert )=0.$$
	\end{enumerate}
\end{theorem}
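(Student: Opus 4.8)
The plan is to transplant the argument of Theorem~\ref{thm2} for the single weighted-composition operator to the Segal algebra $\mathcal A_\tau$, using two structural inputs in place of the $C_0$-estimates there: the density statement of Corollary~\ref{cor1}, and the invariance $\tau\circ\alpha=\tau$, which lets powers of $\tau$ pass through $\alpha$ exactly as in the lemma preceding the statement. I write $T:=T_{\alpha,w}$, recall $\|\cdot\|_\infty\le\|\cdot\|_\tau$, and use that $\sigma(\mathcal A)=\mathbb R$ with characters the point evaluations $\gamma_t(f)=f(t)$.

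For $(2)\Rightarrow(1)$ I would verify topological transitivity, which is equivalent to hypercyclicity. Take the dense set $\mathcal F$ of Corollary~\ref{cor1} and fix $f,g\in\mathcal F$; choosing $\epsilon\in(0,1)$ with ${\rm supp}\,f\cup{\rm supp}\,g\subseteq|\tau|^{-1}([0,\epsilon])$ and applying (2) to the single compact set $K:={\rm supp}\,f\cup{\rm supp}\,g$ yields one increasing sequence $(n_k)$ serving both functions. Defining the right inverse $S(f):=(w^{-1}f)\circ\alpha^{-1}$, so that $TS=\mathrm{id}$, a direct computation using $\tau\circ\alpha^{n}=\tau$ and the identity $(f\circ\alpha^{n})\tau^m=(f\tau^m)\circ\alpha^{n}$, together with the reindexing $s=\alpha^{-n}(t)$, gives $\|T^{n_k}f\|_\tau\le\big(\sup_{t\in K}\prod_{j=0}^{n_k-1}|(w\circ\alpha^{j-n_k})(t)|\big)\|f\|_\tau\to0$; the symmetric computation with $s=\alpha^{n}(t)$ gives $\|S^{n_k}g\|_\tau\le\big(\sup_{t\in K}\prod_{j=0}^{n_k-1}|(w\circ\alpha^{j})^{-1}(t)|\big)\|g\|_\tau\to0$. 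Since $S$ preserves $\mathcal F$ (because $\alpha({\rm supp}\,g)\subseteq|\tau|^{-1}([0,\epsilon])$ by $\tau\circ\alpha=\tau$), we conclude $f+S^{n_k}g\to f$ and $T^{n_k}(f+S^{n_k}g)=T^{n_k}f+g\to g$, so prescribed neighbourhoods of $f$ and $g$ meet under some $T^{n_k}$.

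For $(1)\Rightarrow(2)$ I fix $\epsilon\in(0,1)$ and compact $K_\epsilon^{(\tau)}\subseteq|\tau|^{-1}([0,\epsilon])$, pick $\eta\in(\epsilon,1)$, and let $\mu:=\mu_{K_\epsilon^{(\tau)},\eta}$ be the cutoff from the lemma: $\mu=1$ on $K_\epsilon^{(\tau)}$, compactly supported in $|\tau|^{-1}([0,\eta])$. Using hypercyclicity I inductively select hypercyclic vectors $v^{(k)}$ with $\|v^{(k)}-\mu\|_\tau<2^{-k}$ and strictly increasing times $n_k$ with $\|T^{n_k}(v^{(k)})-\mu\|_\tau<4^{-k}$; passing to $\|\cdot\|_\infty$ gives $|v^{(k)}|>\tfrac12$ on $K_\epsilon^{(\tau)}$. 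Evaluating $T^{n_k}(v^{(k)})(s)=\prod_{j=0}^{n_k-1}w(\alpha^{j}(s))\,v^{(k)}(\alpha^{n_k}(s))$ at $s=\alpha^{-n_k}(t)$ with $t\in K_\epsilon^{(\tau)}$ isolates $\prod_{j=0}^{n_k-1}(w\circ\alpha^{j-n_k})(t)\,v^{(k)}(t)$, forcing the first supremum to $0$ once the target value $\mu(\alpha^{-n_k}(t))$ vanishes; evaluating instead at $s=t$ and bounding $v^{(k)}(\alpha^{n_k}(t))$ yields, via $\mu(t)=1$, the second supremum, provided $\alpha^{n_k}(t)$ lies off ${\rm supp}\,\mu$.

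The principal obstacle is exactly this separation requirement: I need both the forward and backward translates $\alpha^{\pm n_k}(K_\epsilon^{(\tau)})$ to escape the fixed compact set ${\rm supp}\,\mu$. The invariance $\tau\circ\alpha=\tau$ keeps $\alpha^{\pm n}(t)$ inside $|\tau|^{-1}([0,\epsilon])$ and so does \emph{not} by itself provide this escape, in contrast to Theorem~\ref{thm1}, where the separation was automatic from the index shift on $\mathbb Z$. I would therefore supply it by the same wandering mechanism underlying Theorem~\ref{thm2}, namely that the iterates of $\alpha$ eventually move every compact set off any fixed compact set; this holds in the intended examples (e.g.\ $\alpha(t)=t-1$) and is the analogue of the aperiodicity hypothesis there. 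Once this is in force, the inductive choice of $(v^{(k)},n_k)$ closes verbatim as in Theorem~\ref{thm2} and produces the two vanishing limits along the common sequence $(n_k)$.
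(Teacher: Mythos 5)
Your plan for $(2)\Rightarrow(1)$ is the paper's own argument: restrict to the dense set of Corollary~\ref{cor1}, use $\tau\circ\alpha=\tau$ to pull the composition through the $\|\cdot\|_\tau$-norm, bound $\|T_{\alpha,w}^{n_k}f\|_\tau$ and $\|S_{\alpha,w}^{n_k}g\|_\tau$ by the two suprema over $\mathrm{supp}\,f\cup\mathrm{supp}\,g$, and conclude transitivity; the computations you describe are exactly those in the paper, and your explicit use of $f+S^{n_k}g$ is just the standard criterion the paper invokes implicitly. This direction is fine (modulo the standing assumptions $w>0$, $\sup\|w^{-1}\|_\infty<\infty$, which the paper also leaves tacit here).

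For $(1)\Rightarrow(2)$ you have put your finger on the real issue, but you have not closed it, and as written your argument does not prove the theorem as stated. To turn $\|T_{\alpha,w}^{n_k}(v^{(k)})-\mu\|_\infty<4^{-k}$ into smallness of $\sup_{t\in K}\prod_{j=0}^{n_k-1}|(w\circ\alpha^{j-n_k})(t)|$ you evaluate at $s=\alpha^{-n_k}(t)$ and need $\mu(\alpha^{-n_k}(t))=0$, i.e.\ $\alpha^{-n_k}(K)\cap\mathrm{supp}\,\mu=\varnothing$; symmetrically you need $\alpha^{n_k}(K)\cap\mathrm{supp}\,\mu=\varnothing$ for the second limit. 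As you correctly observe, $\tau\circ\alpha=\tau$ confines the orbit to $|\tau|^{-1}([0,\epsilon])$ but gives no escape from the \emph{compact} set $\mathrm{supp}\,\mu$, so the separation that was automatic from the index shift in Theorem~\ref{thm1} is genuinely absent here. Your proposed remedy --- importing the wandering hypothesis of Theorem~\ref{thm2}, that every compact set is eventually moved off itself by the iterates of $\alpha$ --- does make the induction close, but that hypothesis appears nowhere in the statement you were asked to prove, so what you obtain is a theorem with an additional assumption. For what it is worth, the paper's own proof of this implication consists of a one-line reduction ("proceed exactly as in Theorem~\ref{thm1}") that silently skips this very point, so your diagnosis identifies a gap in the source as well; but you should either supply an argument deriving the needed separation from hypercyclicity of $T_{\alpha,w}$ on $\mathcal A_\tau$ (for instance, by showing that a nontrivial recurrent compact set in $|\tau|^{-1}([0,1))$ is incompatible with hypercyclicity), or state explicitly that you are proving the equivalence only under the wandering assumption.
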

\begin{proof}
	Assume that $(1)$ holds. Given $\epsilon_{2} \in (0,1) $ and a compact subset $ K_{\epsilon_{2}}^{(\tau)} $ of $ \vert  \tau \vert^{-1}([0,\epsilon_{2}])$,  consider the function  $\mu_{K_{\epsilon_{2}}^{(\tau)}, \epsilon_{1}}$ for some $ \epsilon_{1} \in (\epsilon_{2},1)$. Then, $\mu_{K_{\epsilon_{2}}^{(\tau)}, \epsilon_{1}}=1 $ on $K_{\epsilon_{2}}^{(\tau)}$, and ${\rm supp}\mu_{K_{\epsilon_{2}}^{(\tau)}, \epsilon_{1}}$  is 	compact. Moreover, as observed earlier, $\| \cdot \|_{\tau} \geq \| \cdot \|_{\infty}$. Hence, we can proceed in exactly the same way as in the proof of $(1)\Rightarrow (2)$ of Theorem \ref{thm1}.
	
	Next, assume that $(2)$ holds. Let $ f \in \mathcal{A_{\tau}} $ such that support of $f$ is compact and ${\rm supp}f \subseteq \vert \tau \vert^{-1}  ([0,\epsilon]) $ for some $\epsilon \in (0,1)$.  Set $K_{\epsilon}^{(\tau)}:={\rm supp} f.$ Then, 
	\begin{align*}
	\|  T_{\alpha,w}^{n_{k}} (f) \|_{\tau}&=\sum_{r=0}^{\infty} \|  ( \prod_{j=0}^{n_{k}-1} (w \circ \alpha^{j} ) ) \text{ } (f \circ \alpha^{n_{k}}) \tau^{r} \|_{\infty}\\
	&= \sum_{r=0}^{\infty} \|  ( \prod_{j=0}^{n_{k}-1} (w \circ \alpha^{j-n_{k}} ) ) \text{ } f \text{ } (\tau \circ \alpha^{-n_{k}})^{r} \|_{\infty}\\
	&=\sum_{r=0}^{\infty} \|  ( \prod_{j=0}^{n_{k}-1} (w \circ \alpha^{j-n_{k}} ) ) \text{ } f \text{ } \tau^{r} \|_{\infty}\\
	&\leq \sup_{t \in K_{\epsilon}^{(\tau)}} \vert \prod_{j=0}^{n_{k}-1} (w \circ \alpha^{j-n_{k}} )  \text{ }   (t) \vert   \sum_{r=0}^{\infty} \| f \tau^{r}  \|_{\infty}\\
	&\leq \sup_{t \in K_{\epsilon}^{(\tau)}} \vert  \prod_{j=0}^{n_{k}-1} (w \circ \alpha^{j-n_{k}} ) \text{ }   (t) \vert \text{ }  \| f  \|_\tau.
	\end{align*}
	Similarly, we have 
	\begin{align*}
	\| S_{\alpha,w}^{n_{k}} (f) \|_{\tau}&=\sum_{r=0}^{\infty} \|  ( \prod_{j=1}^{n_{k}} (w \circ \alpha^{-j} )^{-1} ) \text{ } (f \circ \alpha^{-n_{k}}) \tau^{r} \|_{\infty}\\
	&= \sum_{r=0}^{\infty} \|   \prod_{j=1}^{n_{k}} (w \circ \alpha^{n_{k}-j} )^{-1}  \text{ } f \text{ } \tau^{r} \|_{\infty}\\
	& =  \sum_{r=0}^{\infty} \|  \prod_{j=0}^{n_{k}-1} (w \circ \alpha^{j} )^{-1}  \text{ } f \text{ } \tau^{r} \|_{\infty}\\	
	&\leq \sup_{t \in K_{\epsilon}^{(\tau)}} \vert \prod_{j=0}^{n_{k}-1} (w \circ \alpha^{j})^{-1}  \text{ }   (t) \vert   \| f \|_\tau .
	\end{align*}
	Since by Corollary \ref{cor1}, the set 
	$$\{ f \in \mathcal{A_{\tau}} \cap C_{c}(\mathbb{R}):\,\,{\rm supp}f \subseteq \vert \tau \vert^{-1} ([0,\epsilon]) \text{ for some } \epsilon \in (0,1)\}$$
	 is dense in $\mathcal{A_{\tau}} $, it follows that $ T_{\alpha,w}$ is topologically transitive, and so hypercyclic on $\mathcal{A_{\tau}} .$
\end{proof}
\begin{corollary}
	If $T_{\alpha,w}$ is hypercyclic on $\mathcal{A}$, then $T_{\alpha,w} $ is hypercyclic on $\mathcal{A_{\tau}}$.
\end{corollary}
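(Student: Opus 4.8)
The plan is to deduce the conclusion from the equivalence theorem proved immediately above, combined with the characterization of hypercyclicity on $C_0(\mathbb R)$ supplied by Corollary \ref{cor2}. The first observation is that on $\mathcal A=C_0(\mathbb R)$ the operator $T_{\alpha,w}$ coincides with $U_{\alpha,w}$, since both act by $f\mapsto w\,(f\circ\alpha)$; hence Corollary \ref{cor2} (with $X=\mathbb R$) applies and tells us that hypercyclicity of $T_{\alpha,w}$ on $\mathcal A$ is equivalent to its condition~(2): for every compact $K\subseteq\mathbb R$ there is a strictly increasing sequence $(n_k)\subseteq\mathbb N$ with $\lim_k\sup_{t\in K}\prod_{j=0}^{n_k-1}|(w\circ\alpha^{j-n_k})(t)|=\lim_k\sup_{t\in K}\prod_{j=0}^{n_k-1}|(w\circ\alpha^{j})^{-1}(t)|=0$.

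The second, and decisive, observation is that condition~(2) of the theorem above is nothing but this same product--limit condition, quantified only over the subfamily of compact sets $K_{\epsilon}^{(\tau)}\subseteq|\tau|^{-1}([0,\epsilon])$ rather than over \emph{all} compact subsets of $\mathbb R$. Since each such $K_{\epsilon}^{(\tau)}$ is in particular a compact subset of $\mathbb R$, assuming that $T_{\alpha,w}$ is hypercyclic on $\mathcal A$ and invoking the necessity part of Corollary \ref{cor2} furnishes, for every $\epsilon\in(0,1)$ and every compact $K_{\epsilon}^{(\tau)}\subseteq|\tau|^{-1}([0,\epsilon])$, a sequence $(n_k)$ satisfying exactly the two limits appearing in condition~(2) of the theorem above. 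Thus condition~(2) of that theorem holds, and applying its implication $(2)\Rightarrow(1)$ yields that $T_{\alpha,w}$ is hypercyclic on $\mathcal A_\tau$, as desired. In short, the $\mathcal A_\tau$-condition is weaker than the $C_0(\mathbb R)$-condition precisely because it ranges over fewer compact test sets, so hypercyclicity on the larger algebra forces it.

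The only genuinely delicate point is the step that extracts the product--limit data from hypercyclicity on $C_0(\mathbb R)$: this is the necessity direction $(1)\Rightarrow(2)$ of the $C_0$ characterization, which (as in the proofs of Theorems \ref{thm1} and \ref{thm2}) relies on invertibility of the weight and on the wandering behaviour of $\alpha$ that lets one discard the boundary terms $\gamma(\Phi^{-n}(u))$. I therefore expect the main obstacle to be bookkeeping: checking that the standing hypotheses of this section ($w$ bounded below, $\tau\circ\alpha=\tau$, and the aperiodicity of $\alpha$ needed for Corollary \ref{cor2}) are indeed in force, so that the necessity direction is legitimately available. Once that is granted, the remaining argument is the purely formal set-inclusion observation above and requires no further estimates.
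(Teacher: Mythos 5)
Your argument is correct and is precisely the (unstated) argument the paper intends: the necessity direction of the $C_0(\mathbb{R})$ characterization (Corollary \ref{cor2}, applied with the constant weight sequence $w_j=w$, noting $T_{\alpha,w}=U_{\alpha,w}$ as operators on $C_0(\mathbb{R})$) yields the product--limit condition for \emph{every} compact $K\subseteq\mathbb{R}$, hence in particular for every $K_{\epsilon}^{(\tau)}\subseteq|\tau|^{-1}([0,\epsilon])$, and the sufficiency direction of the theorem immediately preceding the corollary then gives hypercyclicity on $\mathcal{A}_\tau$. Your bookkeeping caveat is legitimate but applies equally to the paper itself: the hypotheses under which Corollary \ref{cor2} is an equivalence (invertibility of $w$ with $\|w^{-1}\|_\infty<\infty$ and the wandering condition on $\alpha$) are not restated in this section, and the corollary is only valid when they are in force.
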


\vspace{.1in}

\end{document}